\newcommand{\QQ}{\mathbb{Q}}
\newcommand{\ZZ}{\mathbb{Z}}
\newcommand{\A}{\mathbb{A}}
\newcommand{\PP}{\mathbb{P}}
\newcommand{\GG}{\mathbb{G}}
\newcommand{\U}{\mcal{U}}
\def\U{{\mcal{U}}}
\def\O{{\mcal{O}}}
\def\vphi{{\varphi}}
\def\vphi{{\varphi}}
\def\OO{\mathcal{O}}
\newtheorem{thm}{Theorem}[section]
\newtheorem{cor}[thm]{Corollary}
\newtheorem{prop}[thm]{Proposition}
\theoremstyle{definition}
\newtheorem{define}[thm]{Definition}
\theoremstyle{remark}
\newtheorem{rem}[thm]{Remark}
\DeclareMathOperator{\Gal}{Gal}
\DeclareMathOperator{\Pic}{Pic}
\DeclareMathOperator{\Br}{Br}
\DeclareMathOperator{\Spec}{Spec}
\DeclareMathOperator{\et}{\acute{e}t}
\DeclareMathOperator{\sing}{sing}
\DeclareMathOperator{\crit}{crit}
\def\inv{\textup{inv}}
\def\T{\mathcal{T}}
\def\lrar{\longrightarrow}
\def\hrar{\hookrightarrow}
\def\val{\mathrm{val}}
\def \mcal{\mathcal}
\def \ovl{\overline}
\def \wtl{\widetilde}
\def \mcal {\mathcal}
\title[Singular curves and the \'etale Brauer--Manin obstruction]{Singular curves and the \'etale Brauer--Manin obstruction for surfaces}
\begin{document}
\date{\today}
\author{Yonatan Harpaz}
\address{Institute for Mathematics, Astrophysics and Particle Physics\\
Radboud University Nijmegen\\Heyendaalseweg 135\\
6525 AJ Nijmegen\\the Netherlands }
\email{harpazy@gmail.com}

\author{Alexei N. Skorobogatov}
\address{Department of Mathematics\\Imperial College London\\
South Kensington Campus\\SW7 2BZ\\U.K.}
\address{Institute for the Information Transmission Problems\\ Russian Academy of Sciences\\
19 Bolshoi Karetnyi\\ Moscow\\ 127994\\ Russia}
\email{a.skorobogatov@imperial.ac.uk}

\subjclass[2010]{Primary 11G35; Secondary 14F22}
\keywords{Hasse principle, weak approximation, Brauer--Manin obstruction;
principe de Hasse, approximation faible, obstruction de Brauer--Manin}

\maketitle

\begin{abstract}
We give an elementary construction of a smooth and 
projective surface over an arbitrary number field $k$
that is a counterexample to the Hasse principle but has infinite
\'etale Brauer--Manin set. Our surface has a surjective morphism
to a curve with exactly one $k$-point
such that the unique $k$-fibre is
geometrically a union of projective lines
with an adelic point and the trivial Brauer group, but no $k$-point.

\medskip

\noindent Nous pr\'esentons une construction \'el\'ementaire d'une
surface lisse et projective sur un corps de nombres
quelconque $k$ qui constitue un contre-exemple au principe
de Hasse et poss\`ede l'ensemble de Brauer--Manin infini.
La surface est munie d'un morphisme surjectif vers une
courbe avec un seul $k$-point tel que l'unique fibre rationnelle,
qui g\'eom\'etriquement est l'union de droites projectives,
a un point ad\'elique et le groupe de Brauer
trivial, mais pas de $k$-points.
\end{abstract}

\section*{Introduction}

For a variety $X$ over a number field $k$ one can study the set
$X(k)$ of $k$-points of $X$ by embedding it into the topological space
of adelic points $X(\A_k)$. In 1970 Manin \cite{Manin} 
suggested to use the pairing
$$X(\A_k)\times\Br(X)\to\QQ/\ZZ$$ provided by local class field theory.
The left kernel of this pairing $X(\A_k)^{\Br}$ is a closed subset
of $X(\A_k)$, and the reciprocity law of global class field theory implies
that $X(k)$ is contained in $X(\A_k)^{\Br}$. The first example of
a smooth and projective variety $X$ such that $X(k)=\emptyset$
but $X(\A_k)^{\Br}\not=\emptyset$ was constructed in \cite{Sk1}
(see \cite{BSk} for a similar example; an 
earlier example conditional on the Bombieri--Lang
conjecture was found in \cite{SW}).
Later, Harari \cite{H} found many varieties $X$ such that $X(k)$ 
is not dense in $X(\A_k)^{\Br}$.
For all of these examples except for that of \cite{SW}
the failure of the Hasse principle
or weak approximation can be explained by the \'etale Brauer--Manin
obstruction (introduced in \cite{Sk1}, see also \cite{P1}):
the closure of $X(k)$ in $X(\A_k)$ is contained in the \'etale
Brauer--Manin set $X(\A_k)^{\et,\Br}\subset X(\A_k)^{\Br}$
which in these cases is smaller than $X(\A_k)^{\Br}$. 
Recently Poonen \cite{P1} 
constructed threefolds (fibred into rational surfaces over
a curve of genus at least 1) such that
$X(k)=\emptyset$ but $X(\A_k)^{\et,\Br}\not=\emptyset$. It is known
that $X(\A_k)^{\et,\Br}$ coincides with the set of adelic points
surviving the descent obstructions defined by torsors of arbitrary
linear algebraic groups (as proved in \cite{D, Sk2} using \cite{H2, Stoll}).

In 1997 Scharaschkin and the second author independently asked the question
whether $X(k)=\emptyset$ if and only if
$X(\A_k)^{\Br}=\emptyset$ when $X$ is a smooth and projective curve.
They also asked if the embedding of $X(k)$ into $X(\A_k)^{\Br}$ defines
a bijection between the closure of $X(k)$ in $X(\A_k)$ and 
the set of connected components of $X(\A_k)^{\Br}$.
Despite some evidence for these conjectures, it may be prudent
to consider also their weaker analogues with $X(\A_k)^{\et,\Br}$
in place of $X(\A_k)^{\Br}$.

In this note we give an elementary construction of a smooth and 
projective surface $X$ over an arbitrary number field $k$
that is a counterexample to the Hasse principle and has infinite
\'etale Brauer--Manin set (Section \ref{HP}).
Even simpler is our counterexample to weak approximation
(Section \ref{WA}).
This is a smooth and projective surface $X$ over $k$ with a
unique $k$-point and infinite \'etale Brauer--Manin set $X(\A_k)^{\et,\Br}$;
moreover, already the subset $Y(\A_k)\cap X(\A_k)^{\et,\Br}$ is infinite,
where $Y$ is the Zariski open subset of $X$ which is the complement to $X(k)$.
Following Poonen we consider families of curves parameterised
by a curve with exactly one $k$-point. The new idea is to make
the unique $k$-fibre a singular curve, geometrically 
a union of projective lines, and then use
properties of rational and adelic points on singular curves. 

The structure of the Picard group of a singular projective curve
is well known, see \cite[Section 9.2]{BLR} or \cite[Section 7.5]{Liu}. 
In Section \ref{B} we give a formula
for the Brauer group of a reduced projective curve, see 
Theorem \ref{p1}.
A singular curve over $k$ can have surprising properties that no
smooth curve can ever have: it can
contain infinitely many adelic points,
only finitely many $k$-points or none at all, and yet
have the trivial Brauer group.
See Corollary \ref{c1} for a singular,
geometrically connected, projective curve over 
an arbitrary number field $k$ that is a counterexample
to the Hasse principle not explained by the Brauer--Manin
obstruction. In \cite{Y} the first author
proves that every counterexample to the Hasse principle
on a curve which geometrically is a union of projective lines,
can be explained by finite descent (and hence by the \'etale
Brauer--Manin obstruction). 
Here we note that geometrically connected and simply connected projective
curves over number fields satisfy the Hasse principle,
a statement that does not generalise to higher dimension,
see Proposition \ref{p-basic} and Remark \ref{2.2}.

This paper was written while the authors were guests of
the Centre Interfacultaire Bernoulli of the Ecole Polytechnique
F\'ed\'erale de Lausanne.

\section{The Brauer group of singular curves} \label{B}

Let $k$ be a field of characteristic $0$ with an algebraic closure
$\bar k$ and the Galois group $\Gamma_k=\Gal(\bar k/k)$. 
For a scheme $X$ over $k$ we write $\ovl X=X\times_k\bar k$.
All cohomology groups in this paper are Galois or \'etale cohomology groups.
Let $C$ be a {\em reduced, geometrically connected, projective} curve over $k$. 
We define the {\em normalisation} $\wtl C$ as the disjoint union of 
normalisations of the irreducible components of $C$.  
The normalisation morphism $\nu:\wtl C\to C$ factors as
$$\wtl C\stackrel{\nu'}\lrar C'\stackrel{\nu''}\lrar C,$$ 
where $C'$ is a maximal intermediate curve
universally homeomorphic to $C$, see \cite[Section 9.2, p. 247]{BLR}
or \cite[Section 7.5, p. 308]{Liu}.
The curve $C'$ is obtained from $\wtl C$ by identifying the points
which have the same image in $C$.
In particular, there is a canonical
bijection $\nu'':C'(K)\tilde\lrar C(K)$ for any field extension $K/k$. 
The curve $C'$ has mildest possible singularities: for each singular
point $s\in C'(\bar k)$ the branches of $\ovl C'$ through $s$ intersect
like $n$ coordinate axes at $0\in \A^n_k$.

Let us define the following reduced $0$-dimensional schemes:
\begin{equation}
\Lambda=\Spec(H^0(\wtl C,\mathcal{O}_{\wtl C})), \ \ 
\Pi=C_{\mathrm{sing}}, \ \ 
\Psi=\big(\Pi\times_C\wtl C\big)_{\mathrm{red}}.
\label{ee3}
\end{equation}
Here $\Lambda$ is the $k$-scheme of geometric irreducible components of $C$
(or the geometric connected components of $\wtl C$); it is
the disjoint union of closed points $\lambda=\Spec(k(\lambda))$
such that $k(\lambda)$ is the algebraic closure of $k$ 
in the function field 
of the corresponding irreducible component $k(C_\lambda)=k(\wtl C_\lambda)$.
Next, $\Pi$ is the union of singular points of $C$, and 
$\Psi$ is the union of fibres of $\nu: \wtl C\to C$ over the singular
points of $C$ with their reduced subscheme structure. 
The morphism $\nu''$ induces an isomorphism 
$\big(\Pi\times_C C'\big)_{\mathrm{red}}\tilde\lrar \Pi$, so we can
identify these schemes.
Let $i:\Pi\to C$, $i':\Pi\to C$ and $j:\Psi\to \wtl C$ be the natural
closed immersions. We have a commutative diagram
$$ \xymatrix{
\wtl C \ar^{\nu'}[r]& C' \ar^{\nu''}[r]& C\\
\Psi \ar^{j}[u]\ar^{\nu'}[r] & \Pi \ar^{i'}[u]\ar_{i}[ur]&\\
}$$
The restriction of $\nu$ to the smooth locus
of $C$ induces isomorphisms 
$$\wtl C\setminus j(\Psi)\tilde\lrar C'\setminus i'(\Pi) 
\tilde\lrar C\setminus i(\Pi).$$

An algebraic group over $\Pi$ is a product
$G=\prod_\pi i_{\pi*}(G_\pi)$, where $\pi$ ranges over the irreducible 
components of $\Pi$, $i_\pi: \Spec(k(\pi))\to\Pi$ is the natural
closed immersion, and $G_\pi$ is an algebraic group over the field $k(\pi)$.

\begin{prop}\label{exact}
$(i)$ The canonical maps $\GG_{m,C'}\to \nu'_*\GG_{m,\wtl C}$ and
$\GG_{m,C'}\to i'_*\GG_{m,\Pi}$ give rise
to the exact sequence of \'etale sheaves on $C'$
\begin{equation}
0\to\GG_{m,C'}\to \nu'_*\GG_{m,\wtl C}\oplus i'_*\GG_{m,\Pi}\to i'_*\nu'_*\GG_{m,\Psi}
\to 0, \label{ee1}
\end{equation}
where $\nu'_*\GG_{m,\Psi}$ is an algebraic torus over $\Pi$.

$(ii)$ The canonical map $\GG_{m,C}\to \nu''_*\GG_{m,C'}$ gives rise
to the exact sequence of \'etale sheaves on $C$:
\begin{equation}
0\to\GG_{m,C}\to \nu''_*\GG_{m,C'}\to i_*\U \to 0, \label{ee2}
\end{equation}
where $\U$ is a commutative unipotent group over $\Pi$.
\end{prop}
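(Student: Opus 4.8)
The plan is to verify exactness of both sequences on geometric stalks, which suffices for a sequence of \'etale sheaves, and to reduce each stalk computation to the local algebra of a curve singularity. Fix a geometric point $\ovl x$ lying over a closed point of the relevant base curve. Away from the singular locus both $\nu'$ and $\nu''$ are isomorphisms, so at a smooth point every pushforward from $\Pi$ or $\Psi$ has trivial stalk, the remaining terms reduce to the identity map $\mathcal{O}^*\to\mathcal{O}^*$ on the strict henselisation, and both sequences are trivially exact. Thus everything takes place at a singular point, where I would write $A'=\mathcal{O}_{C',\ovl x}^{sh}$, $A=\mathcal{O}_{C,\ovl x}^{sh}$, and $\wtl A=\prod_{r=1}^n B_r$ for the normalisation, a product of strictly henselian discrete valuation rings indexed by the $n$ branches through $\ovl x$, each $B_r$ and each residue field being $\ovl k$.

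For part $(i)$ I would use that, by hypothesis, the singularity of $C'$ at $\ovl x$ is that of $n$ coordinate axes, so $A'=\{(b_1,\dots,b_n)\in\wtl A:b_1(\ovl x)=\dots=b_n(\ovl x)\}$ and the conductor of $A'$ in $\wtl A$ is exactly the maximal ideal $\m_{A'}$; hence $A'/\m_{A'}=\ovl k$ and $\wtl A/\m_{A'}\wtl A=\ovl k^{\,n}$. The square relating $A'$, $\wtl A$, $\ovl k$ and $\ovl k^{\,n}$ is a Milnor (conductor) square, and the stalks of \eqref{ee1} at $\ovl x$ are precisely its units,
\[ 1\to A'^*\to \wtl A^*\times\ovl k^*\to (\ovl k^{\,n})^*\to 1, \]
the first map being $u\mapsto(\nu'^*u,u(\ovl x))$ and the second $((b_r),c)\mapsto(b_r(\ovl x)c^{-1})_r$. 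Injectivity is clear because $A'\hookrightarrow\wtl A$; exactness in the middle is the description of $A'$ as the fibre product just given; and surjectivity follows from the units--Picard sequence of a Milnor square together with $\Pic A'=\Pic\wtl A=0$ (local rings), or directly by lifting each target coordinate to a constant unit of $B_r$. Finally, since $\nu':\Psi\to\Pi$ is finite \'etale in characteristic $0$, the sheaf $\nu'_*\GG_{m,\Psi}$ on $\Pi$ is the Weil restriction $R_{\Psi/\Pi}\GG_m$, an algebraic torus.

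For part $(ii)$ I would compare $A\subseteq A'\subseteq\wtl A$, all three sharing the conductor $\mathfrak{c}$ of $A$ in $\wtl A$, which is an ideal of $\wtl A$ of finite colength and therefore a common ideal of all three rings. Reduction modulo $\mathfrak{c}$ identifies the cokernel: since $1+\mathfrak{c}$ lies in $A^*$ and in $A'^*$, one gets $A'^*/A^*\cong(A'/\mathfrak{c})^*/(A/\mathfrak{c})^*$, where $A/\mathfrak{c}\subseteq A'/\mathfrak{c}$ are finite-dimensional local $\ovl k$-algebras with residue field $\ovl k$. Splitting off the residue field gives $(A/\mathfrak{c})^*=\ovl k^*\times(1+\m_{A/\mathfrak{c}})$ and likewise for $A'$, so the cokernel is $(1+\m_{A'/\mathfrak{c}})/(1+\m_{A/\mathfrak{c}})$; as $\m_{A'/\mathfrak{c}}$ is nilpotent, the logarithm identifies this with a quotient of finite-dimensional $\ovl k$-vector spaces, hence with a commutative unipotent group (a vector group, since $\mathrm{char}\,k=0$). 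This shows the stalk of $\nu''_*\GG_{m,C'}/\GG_{m,C}$ at $\ovl x$ is unipotent; assembling these fibres over the points $\pi$ of $\Pi$ yields $\U=\prod_\pi i_{\pi*}\U_\pi$ with $\U_\pi=(A'/\mathfrak{c})^*/(A/\mathfrak{c})^*$ over $k(\pi)$, completing the identification of \eqref{ee2} as a short exact sequence with unipotent cokernel.

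The main obstacle I anticipate is the local analysis at the singular points: pinning down the conductor and the precise form of $A'$ as a fibre product for $(i)$, and for $(ii)$ controlling the cokernel $A'^*/A^*$ uniformly enough to recognise it as the $\ovl k$-points of an algebraic unipotent group over $\Pi$ rather than merely an abstract group. Establishing surjectivity in $(i)$---equivalently the vanishing of the relevant local Picard contribution---and checking that the stalkwise constructions glue to the asserted torus and unipotent group schemes over $\Pi$ are the steps requiring the most care.
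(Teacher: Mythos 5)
Your proof is correct and follows essentially the same route as the paper: both arguments verify exactness on geometric stalks, reduce part $(i)$ to the units of the conductor (Milnor) square expressing $\O_{C',\bar x}^{sh}$ as the fibre product of the normalisation's branches over the residue field, and reduce part $(ii)$ to showing that the stalk $\O_{\bar y}^*/\O_{\bar x}^*$ of the cokernel, supported on $i(\Pi)$, is unipotent. The only difference is that where the paper invokes \cite[Lemma 7.5.12 (c)]{Liu} for this unipotence, you prove it directly via the common conductor ideal, the splitting $(A/\mathfrak{c})^* = \bar k^*\times(1+\m_{A/\mathfrak{c}})$ and the logarithm in characteristic $0$ --- a self-contained unpacking of the same standard argument.
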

\begin{proof}
This is essentially well known, see \cite{BLR}, the proofs of 
Propositions 9.2.9 and 9.2.10,
or \cite[Lemma 7.5.12]{Liu}. By \cite[Thm. II.2.15 (b), (c)]{EC} 
it is enough to check the exactness of (\ref{ee1}) 
at each geometric point $\bar x$ of $C'$. 
If $\bar x\notin i'(\Pi)$, this is obvious since
locally at $\bar x$ the morphism $\nu'$ is an isomorphism, and the 
stalks $(i'_*\GG_{m,\Pi})_{\bar x}$ and 
$(i'_*\nu'_*\GG_{m,\Psi})_{\bar x}$ are zero.
Now let $\bar x\in i'(\Pi)$, and let $\O_{\bar x}$ be the strict
henselisation of the local ring of $\bar x$ in $C'$. 
Each geometric point $\bar y$ of $\wtl C$ belongs to exactly one
geometric connected component of $\wtl C$, and we denote by
$\O_{\bar y}$ the strict henselisation of the local ring of $\bar y$ in its
geometric connected component. By the construction of $C'$ 
we have an exact sequence 
$$0\lrar \O_{\bar x}\lrar k(\bar x)\times
\prod_{\nu'(\bar y)=\bar x}\O_{\bar y}\lrar 
\prod_{\nu'(\bar y)=\bar x} k(\bar y)\lrar 0,$$
where $\O_{\bar y}\to k(\bar y)$ is the reduction modulo
the maximal ideal of $\O_{\bar y}$, and $k(\bar x)\to k(\bar y)$
is the multiplication by $-1$. 
We obtain an exact sequence of abelian groups
$$1\lrar \O_{\bar x}^*\lrar k(\bar x)^*\times
\prod_{\nu'(\bar y)=\bar x}\O_{\bar y}^*\lrar 
\prod_{\nu'(\bar y)=\bar x} k(\bar y)^*\lrar 1.$$
Using \cite[Cor. II.3.5 (a), (c)]{EC} one sees that
this is the sequence of stalks of (\ref{ee1}) at
$\bar x$, so that $(i)$ is proved.

To prove $(ii)$ consider the exact sequence 
$$0\to\GG_{m,C}\to \nu''_*\GG_{m,C'}\to \nu''_*\GG_{m,C'}/\GG_{m,C}
\to 0.$$
Since $\nu''$ is an isomorphism away from $i(\Pi)$, the restriction
of the sheaf $\nu''_*\GG_{m,C'}/\GG_{m,C}$ to $C\setminus i(\Pi)$
is zero, hence $\nu''_*\GG_{m,C'}/\GG_{m,C}=i_*\U$
for some sheaf $\U$ on $\Pi$. To see that $\U$ is a unipotent
group scheme it is enough to check the stalks at geometric points.
Let $\bar x$ be a geometric point of $i(\Pi)$, and let
$\bar y$ be the unique geometric point of $C'$ such that
$\nu''(\bar y)=\bar x$. Let $\O_{\bar x}$ and $\O_{\bar y}$
be the corresponding strictly henselian local rings.
The stalk $(\nu''_*\GG_{m,C'}/\GG_{m,C})_{\bar x}$ is 
$\O_{\bar y}^*/\O_{\bar x}^*$, and according to \cite[Lemma 7.5.12 (c)]{Liu},
this is a unipotent group over the field $k(\bar x)$.
This finishes the proof.
\end{proof}

\begin{rem}
The first part of Proposition~\ref{exact} has an alternative proof which is easier to generalise to higher dimension. Let $X$ be a projective $k$-variety with 
normalisation morphism $\nu: \wtl{X} \to X$. Assume that $\wtl{X}$, $X_{\sing}$ and $\wtl{X}_{\crit}$ are smooth, where $X_{\sing}$ is the singular locus of $X$ and $\wtl{X}_{\crit} = \nu^{-1}(X_{\sing}) \subseteq \wtl{X}$ is the critical locus of $\nu$. (This assumption is automatically satisfied when $X$ is a curve.) The analogue of $C'$ is the $K$-variety $X'$ given by the pushout in the square
$$ \xymatrix{
\wtl{X}_{\crit} \ar^{j}[r]\ar_{g}[d] & \wtl{X} \ar^{\nu'}[d] \\
X_{\sing} \ar^{i'}[r] & X' \\
}$$
This pushout exists in the category of $K$-varieties since $i'$ is a closed embedding and $g$ is an affine morphism of smooth projective varieties 
(see~\cite[Thm. $5.4$]{FE}). One then proves that the sequence of sheaves
$$ 0 \lrar \GG_{m,X'} \lrar \nu'_*\GG_{m,\wtl{X}} \oplus i'_*\GG_{m,X_{\sing}} \lrar \nu'_*j_*\GG_{m,\wtl{X}_{\crit}} \lrar 0 $$
is exact, as follows. From the definition of $X'$ we obtain that the square
$$ \xymatrix{
\OO_{X'} \ar[r]\ar[d] & \nu'_*\OO_{\wtl{X}} \ar[d] \\
i'_*\OO_{X_{\sing}} \ar[r] & \nu'_*j_*\OO_{\wtl{X}_{\crit}} \\
}$$
is a Cartesian diagram of sheaves of rings on $X'$ (where Cartesian for sheaves means Cartesian on the stalks). The functor $\GG_m$ from the category of rings 
with $1$ to the 
category of abelian groups that associates to a ring its group of units, 
commutes with limits and filtered colimits (e.g. with taking stalks). 
This implies that the diagram
$$ \xymatrix{
\GG_{m,X'} \ar[r]\ar[d] & \nu'_*\GG_{m,\wtl{X}} \ar[d] \\
i'_*\GG_{m,X_{\sing}} \ar[r] & \nu'_*j_*\GG_{m,\wtl{X}_{\crit}} \\
}$$
is Cartesian. Hence the exactness of the sequence
$$ 0 \lrar \GG_{m,X'} \lrar \nu'_*\GG_{m,\wtl{X}} \oplus i'_*\GG_{m,X_{\sing}} \lrar \nu'_*j_*\GG_{m,\wtl{X}_{\crit}}. $$
It remains to check that the last map here is surjective. The morphism 
$\nu$ is finite, hence the functor $\nu_*$ is exact \cite[Cor. II.3.6]{EC}.
The map
$\GG_{m,\wtl{X}} \to j_*\GG_{m,\wtl{X}_{\crit}}$ is surjective,
because $j:\wtl{X}_{\crit}\to \wtl{X}$ is a closed embedding, and thus
$\nu'_*\GG_{m,\wtl{X}} \to \nu'_*j_*\GG_{m,\wtl{X}_{\crit}}$ is 
surjective too.
\end{rem}

For fields $k_1,\ldots,k_n$, we have $\Br\big(\coprod_{i=1}^n\Spec(k_i)\big)=\oplus_{i=1}^n\Br(k_i)$.

\begin{thm} \label{p1}
Let $C$ be a reduced, geometrically connected, projective curve,
and let $\Lambda$, $\Pi$ and $\Psi$ be the schemes defined
in $(\ref{ee3})$. Let $\Lambda=\coprod_\lambda \Spec(k(\lambda))$
be the decomposition into the disjoint union of 
connected components, so that $\wtl C=\coprod_\lambda \wtl C_\lambda$,
where $\wtl C_\lambda$ is a geometrically integral, smooth, projective
curve over the field $k(\lambda)$. Then there is an exact sequence
\begin{equation}
0\lrar \Br(C)\lrar \Br(\Pi)\oplus\bigoplus_{\lambda\in\Lambda}
\Br(\wtl C_\lambda) \lrar \Br(\Psi),
\label{ee4}
\end{equation}
where the maps are the composition of canonical maps
$$\Br(\wtl C_\lambda)\to \Br(\wtl C_\lambda\cap \Psi)
\to \Br(\Psi),$$
and the opposite of the restriction map $\Br(\Pi)\to \Br(\Psi)$.
\end{thm}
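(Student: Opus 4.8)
The plan is to interpret every Brauer group as $H^2$ with $\GG_m$-coefficients in the \'etale topology, and to feed the two short exact sequences of Proposition~\ref{exact} into their long exact cohomology sequences, reading off the $H^2$-terms. The whole proof is then a matter of identifying the cohomology of the various pushforward sheaves and killing the spurious terms.

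First I would treat sequence~(\ref{ee2}) on $C$. Since $\nu''$ is finite, the functor $\nu''_*$ is exact and $R^q\nu''_*\GG_{m,C'}=0$ for $q>0$, so the Leray spectral sequence gives $H^q(C,\nu''_*\GG_{m,C'})\cong H^q(C',\GG_{m,C'})$; likewise, as $i$ is a closed immersion, $H^q(C,i_*\U)\cong H^q(\Pi,\U)$. The crucial point is that $\U$ is a commutative unipotent group over the $0$-dimensional scheme $\Pi=\coprod_\pi\Spec(k(\pi))$ in characteristic $0$. Filtering $\U$ by copies of $\GG_a$ and using the normal basis theorem (so that $H^q(\Spec K,\GG_a)=H^q(\Gamma_K,\bar K)=0$ for all $q\ge 1$ over any field $K$ of characteristic $0$), one obtains $H^q(\Pi,\U)=0$ for every $q\ge 1$. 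Inserting the vanishings at $q=1,2$ into the long exact sequence of (\ref{ee2}) yields a canonical isomorphism $\Br(C)\stackrel{\sim}{\lrar}\Br(C')$.

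Next I would take the long exact cohomology sequence on $C'$ attached to (\ref{ee1}). Again $\nu'$ is finite and $i'$ is a closed immersion, so the three pushforwards are cohomologically transparent: $H^q(C',\nu'_*\GG_{m,\wtl C})\cong H^q(\wtl C,\GG_m)=\bigoplus_\lambda H^q(\wtl C_\lambda,\GG_m)$, while $H^q(C',i'_*\GG_{m,\Pi})\cong H^q(\Pi,\GG_m)$ and $H^q(C',i'_*\nu'_*\GG_{m,\Psi})\cong H^q(\Psi,\GG_m)$. The relevant stretch of the long exact sequence reads
\begin{equation*}
\Pic(\Psi)\lrar \Br(C')\lrar \Br(\wtl C)\oplus\Br(\Pi)\lrar \Br(\Psi).
\end{equation*}
Since $\Psi$ is a reduced $0$-dimensional scheme, i.e. a finite disjoint union of spectra of fields, its Picard group vanishes, $\Pic(\Psi)=0$. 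Hence $\Br(C')\to\Br(\wtl C)\oplus\Br(\Pi)$ is injective and the sequence $0\to\Br(C')\to\Br(\wtl C)\oplus\Br(\Pi)\to\Br(\Psi)$ is exact. Combining this with the isomorphism $\Br(C)\cong\Br(C')$ from the previous step and with $\Br(\wtl C)=\bigoplus_\lambda\Br(\wtl C_\lambda)$ produces the asserted exact sequence (\ref{ee4}). The shape of the maps is read directly off (\ref{ee1}): the summand $\Br(\wtl C_\lambda)\to\Br(\Psi)$ is restriction $\Br(\wtl C_\lambda)\to\Br(\wtl C_\lambda\cap\Psi)\to\Br(\Psi)$, and the fact that the second arrow in (\ref{ee1}) is a difference of two restrictions accounts for the minus sign in front of the restriction $\Br(\Pi)\to\Br(\Psi)$.

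The main obstacle I anticipate is precisely the vanishing $H^q(\Pi,\U)=0$ for $q\ge 1$: this is where the characteristic-$0$ hypothesis and the unipotence of $\U$ are indispensable, and it is what forces the singular curve $C$ to have the same Brauer group as its mild model $C'$. Everything else is formal: exactness of the pushforwards and vanishing of higher direct images follow from finiteness of $\nu$, and the triviality of $\Pic(\Psi)$ follows from $\Psi$ being $0$-dimensional. The only bookkeeping to watch is the consistent interpretation of $\Br(\,\cdot\,)$ as $H^2(\,\cdot\,,\GG_m)$ on the possibly singular curves $C$ and $C'$.
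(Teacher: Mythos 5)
Your proposal is correct and follows essentially the same route as the paper: take the long exact cohomology sequences of the two short exact sequences in Proposition~\ref{exact}, kill $H^q(\Pi,\U)$ for $q\geq 1$ using unipotence in characteristic $0$ to get $\Br(C)\cong\Br(C')$, and then use Hilbert's theorem 90 (i.e. $\Pic(\Psi)=H^1(\Psi,\GG_m)=0$) to extract the exact sequence (\ref{ee4}) from (\ref{ee1}). The only cosmetic difference is that you phrase the pushforward identifications via the Leray spectral sequence, where the paper cites exactness of $\nu_*$ and $i_*$ for finite morphisms directly.
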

\begin{proof}
Let $\pi$ range over the irreducible components of $\Pi$, so that
$\U=\prod_{\pi} i_{\pi*}(U_\pi)$, where $U_\pi$ is a 
commutative unipotent group over the field $k(\pi)$. 
Since $i_*$ is an exact functor \cite[Cor. II.3.6]{EC}, 
we have $H^n(C,i_*\U)=H^n(\Pi,\U)=\prod_\pi H^n(k(\pi),U_\pi)$.
The field $k$ has characteristic 0, and it is well known that this implies
that any commutative unipotent group has zero cohomology in degree
$n>0$. (Such a group has a composition
series with factors $\GG_a$, and $H^n(k,\GG_a)=0$ for any $n>0$,
see \cite[X, Prop. 1]{CL}.)
Thus the long exact sequence of cohomology groups associated to (\ref{ee2})
gives rise to an isomorphism $\Br(C)=H^2(C,\GG_{m,C}) \tilde\lrar
H^2(C,\nu''_*\GG_{m,C'})$. Since $\nu''$ is finite, the functor 
$\nu''_*$ is exact 
\cite[Cor. II.3.6]{EC}, so we obtain an isomorphism
$\Br(C)\tilde\lrar \Br(C')$. We now apply similar arguments to
(\ref{ee1}). Hilbert's theorem 90 gives 
$H^1(\Pi,\nu'_*\GG_{m,\Psi})=H^1(\Psi,\GG_{m,\Psi})=0$,
so we obtain the exact sequence (\ref{ee4}).
\end{proof}
Recall that a reduced, geometrically connected, projective curve $S$ 
over a field $k$ is called {\em semi-stable}
if all the singular points of $S$ are ordinary double points \cite[Def. 9.2.6]{BLR}.

\begin{define} \label{bi}
A semi-stable curve is called {\em bipartite} if it is a union of 
two smooth curves without common irreducible components.
\end{define}

\begin{cor} \label{p3}
Let $S=S^+\cup S^-$ be a bipartite curve, where $S^+$ and $S^-$ are smooth
curves such that $S^+\cap S^-$ is finite.
Then there is an exact sequence
\begin{equation}
0\lrar\Br(S)\lrar\Br(S^+)\oplus\Br(S^-)\lrar \Br(S^+\cap S^-),
\label{ee5}
\end{equation}
where $\Br(S)\to\Br(S^+)\oplus\Br(S^-)$ is the natural map, and
$\Br(S^\pm)\to\Br(S^+\cap S^-)$ is the restriction
map multiplied by $\pm 1$.
\end{cor}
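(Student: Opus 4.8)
The plan is to derive Corollary~\ref{p3} as a direct specialisation of Theorem~\ref{p1}. First I would check that a bipartite curve $S = S^+ \cup S^-$ is indeed a reduced, geometrically connected, projective curve to which the theorem applies: reducedness and projectivity are built into the definition of semi-stable, and geometric connectedness follows from the assumption that $S^+ \cap S^-$ is nonempty together with $S^+$, $S^-$ being (geometrically connected components of the) smooth parts. I would then identify the three schemes $\Lambda$, $\Pi$, $\Psi$ of $(\ref{ee3})$ in this concrete situation. Since all singular points are ordinary double points where exactly one branch of $S^+$ meets one branch of $S^-$, the normalisation is $\wtl S = S^+ \coprod S^-$ (the disjoint union, as $S^+$ and $S^-$ share no irreducible components), the singular locus is $\Pi = S^+ \cap S^-$, and the fibre scheme $\Psi = (\Pi \times_S \wtl S)_{\mathrm{red}}$ consists of two reduced points of $\wtl S$ lying over each node.

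Next I would plug these identifications into the exact sequence $(\ref{ee4})$. The term $\bigoplus_{\lambda} \Br(\wtl C_\lambda)$ becomes $\Br(S^+) \oplus \Br(S^-)$, the term $\Br(\Pi)$ becomes $\Br(S^+ \cap S^-)$, and the target $\Br(\Psi)$ is what remains to be analysed. The key computation is that the double cover $\nu': \Psi \to \Pi$ splits canonically as $\Psi = (S^+ \cap S^-) \coprod (S^+ \cap S^-)$, with one copy recording the branch on $S^+$ and the other the branch on $S^-$. Using the additivity $\Br\big(\coprod \Spec(k_i)\big) = \bigoplus \Br(k_i)$ noted just before the theorem, I would obtain $\Br(\Psi) = \Br(S^+ \cap S^-) \oplus \Br(S^+ \cap S^-)$, and I would arrange the map from $(\ref{ee4})$ so that $\Br(S^+)$ maps to the first summand via restriction to its branch and $\Br(S^-)$ to the second.

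The main point — and the step I expect to require the most care — is tracking the signs and collapsing the doubled target back down to a single copy of $\Br(S^+ \cap S^-)$. The map in $(\ref{ee4})$ sends $\Br(\Pi)$ to $\Br(\Psi)$ by the opposite of the restriction, i.e. diagonally with a minus sign, and sends $\Br(\wtl C_\lambda)$ by restriction to its component of $\Psi$. Composing the sequence $(\ref{ee4})$ with the difference map $\Br(\Psi) = \Br(S^+\cap S^-)^{\oplus 2} \to \Br(S^+ \cap S^-)$, $(a,b) \mapsto a - b$, I would verify that the resulting three-term complex is exactly $(\ref{ee5})$: injectivity of $\Br(S) \to \Br(S^+) \oplus \Br(S^-)$ is inherited directly from Theorem~\ref{p1}, while exactness at the middle follows because the restrictions of $\Br(S^+ \cap S^-)$ to the two branches of $\Psi$ are identical, so the image of $\Br(\Pi)$ in $\Br(\Psi)$ dies under the difference map and the kernel is unchanged. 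This gives that $\Br(S^+) \to \Br(S^+ \cap S^-)$ is restriction and $\Br(S^-) \to \Br(S^+ \cap S^-)$ is minus the restriction, as required, so the sequence $(\ref{ee5})$ is exact.
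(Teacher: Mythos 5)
Your setup is identical to the paper's: you specialise Theorem \ref{p1}, and the identifications $\wtl S = S^+ \coprod S^-$, $\Pi = S^+\cap S^-$, $\Psi \cong \Pi \coprod \Pi$ (one copy per branch) and $\Br(\Psi) = \Br(\Pi)\oplus\Br(\Pi)$ are all correct. The problem is in your final ``collapsing'' step. Composing the sequence (\ref{ee4}) with the difference map $\Br(\Psi) = \Br(\Pi)^{\oplus 2}\to\Br(\Pi)$, $(a,b)\mapsto a-b$, does \emph{not} produce (\ref{ee5}): the middle term of the composed complex is still $\Br(\Pi)\oplus\Br(S^+)\oplus\Br(S^-)$, and, more seriously, the composed complex is no longer exact there. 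The very fact you cite --- that the image of $\Br(\Pi)$ in $\Br(\Psi)$ is (anti-)diagonal and so dies under the difference map --- works \emph{against} you: it means the whole summand $\Br(\Pi)\oplus 0\oplus 0$ lands in the kernel of the composite, whereas an element $(\pi,0,0)$ with $\pi\neq 0$ is not in the image of $\Br(S)$, since its image under the last map of (\ref{ee4}) is $(-\pi,-\pi)\neq 0$ in $\Br(\Psi)$. So the kernel does change --- it strictly grows --- and your assertion ``the kernel is unchanged'' is false; exactness of (\ref{ee5}) at the middle term is not established by this argument.

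What is missing is the argument that lets you delete the $\Br(\Pi)$ factor from the middle term without losing exactness. Concretely: given $(a^+,a^-)\in\Br(S^+)\oplus\Br(S^-)$ with $a^+|_\Pi = a^-|_\Pi$, set $\pi := a^+|_\Pi$; then the triple $(\pi,a^+,a^-)$ lies in the kernel of the last map of (\ref{ee4}) --- the signs cancel exactly because $\Br(\Pi)\to\Br(\Psi)$ enters with the opposite sign --- hence by Theorem \ref{p1} it comes from some $\beta\in\Br(S)$, and then $(a^+,a^-)=(\beta|_{S^+},\beta|_{S^-})$. Conversely, the restrictions of a class in $\Br(S)$ to $S^+$ and $S^-$ agree on $\Pi$, and injectivity of $\Br(S)\to\Br(S^+)\oplus\Br(S^-)$ follows from (\ref{ee4}) once you note that the component $\Br(S)\to\Br(\Pi)$ factors through $\Br(S^+)$ (a point your ``inherited directly'' also glosses over). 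The paper packages this chase more efficiently: since $\Br(\Pi)\to\Br(\Psi)$ is injective, one may take quotients by $\Br(\Pi)$ simultaneously in the middle and last terms of (\ref{ee4}), and exactness passes to the quotient sequence, which is precisely (\ref{ee5}). Either formulation repairs your proof; the composition trick alone, as written, does not.
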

\begin{proof}
In our previous notation we have $\wtl S=S^+\coprod S^-$,
$\Pi=S^+\cap S^-$,  and $\Psi$ is the disjoint union of two
copies of $\Pi$, namely $\Psi^+=\Psi\cap S^+$ and $\Psi^-=\Psi\cap S^-$.
In particular, the restriction map $\Br(\Pi)\to\Br(\Psi)$ is injective.
Thus taking the quotients by $\Br(\Pi)$ in the middle and last terms of
(\ref{ee4}), we obtain (\ref{ee5}).
\end{proof}

The constructions in Sections \ref{WA} and \ref{HP} use singular curves 
of the following special kind.

\begin{define}
A reduced, geometrically connected, projective curve $C$ over $k$ is called {\em conical}
if every irreducible component of $\ovl C$ is rational.
\end{define}

For {\em bipartite conical} curves the calculation of the Brauer group
can be carried out using only the Brauer groups of fields.

\begin{cor} \label{c4}
Let $C=C^+\cup C^-$ be a bipartite conical curve, and let 
$\Lambda=\Lambda^+\coprod\Lambda^-$ be the corresponding 
decomposition of $\Lambda$.
Then $\Br(C)$ is the kernel of the map
$$\bigoplus_{\lambda\in\Lambda^+}\Br(k(\lambda))/[\wtl C_\lambda]\ \oplus\
\bigoplus_{\lambda\in\Lambda^-}\Br(k(\lambda))/[\wtl C_\lambda]\
\lrar \Br(C^+\cap C^-),$$
where $[\wtl C_\lambda]\in \Br(k(\lambda))$
is the class of the conic $\wtl C_\lambda$ over the field $k(\lambda)$,
and the map 
$\Br(k(\lambda))/[\wtl C_\lambda]\to \Br(C^+\cap C^-)$ is
the restriction followed by multiplication by $\pm 1$ when $\lambda\in\Lambda^\pm$.
\end{cor}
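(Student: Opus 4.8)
The plan is to reduce everything to Corollary~\ref{p3} by computing the Brauer group of each smooth conic $\wtl C_\lambda$. Since $C=C^+\cup C^-$ is bipartite, the curves $C^+$ and $C^-$ are smooth, and being the disjoint unions of their irreducible components they satisfy $\Br(C^\pm)=\bigoplus_{\lambda\in\Lambda^\pm}\Br(\wtl C_\lambda)$. Applying Corollary~\ref{p3} with $S^\pm=C^\pm$ therefore already gives the exact sequence
$$0\lrar \Br(C)\lrar \bigoplus_{\lambda\in\Lambda^+}\Br(\wtl C_\lambda)\ \oplus\ \bigoplus_{\lambda\in\Lambda^-}\Br(\wtl C_\lambda)\lrar \Br(C^+\cap C^-),$$
so that it remains only to rewrite each $\Br(\wtl C_\lambda)$ and to identify the component maps.

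The key step is the computation $\Br(\wtl C_\lambda)\cong \Br(k(\lambda))/\langle[\wtl C_\lambda]\rangle$ for the smooth conic $\wtl C_\lambda$ over $K=k(\lambda)$. Because $C$ is conical and $\wtl C_\lambda$ is geometrically integral, $\ovl{\wtl C_\lambda}\cong\PP^1_{\bar k}$, so $\Pic(\ovl{\wtl C_\lambda})=\ZZ$ with trivial $\Gamma_K$-action and $\Br(\ovl{\wtl C_\lambda})=0$. I would feed this into the low-degree exact sequence of the Hochschild--Serre spectral sequence (using Hilbert~90 for the first term),
$$\Pic(\ovl{\wtl C_\lambda})^{\Gamma_K}\stackrel{\delta}\lrar \Br(K)\lrar \Br(\wtl C_\lambda)\lrar H^1(K,\Pic(\ovl{\wtl C_\lambda})).$$
Here $\Br(\wtl C_\lambda)$ replaces the algebraic part $\Br_1(\wtl C_\lambda)$ since $\Br(\ovl{\wtl C_\lambda})=0$, and $H^1(K,\ZZ)=\Hom_{\mathrm{cont}}(\Gamma_K,\ZZ)=0$. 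Hence $\Br(K)\to\Br(\wtl C_\lambda)$ is surjective with kernel the image of $\delta$, which is generated by $\delta(1)=[\wtl C_\lambda]$, the class of the conic. This yields the asserted isomorphism; alternatively one may simply invoke Amitsur's theorem.

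Finally I would identify the boundary maps. For a closed point $P\in C^+\cap C^-$ lying on $\wtl C_\lambda$, the composite $\Br(\wtl C_\lambda)\to\Br(\wtl C_\lambda\cap\Psi)\to\Br(P)$ appearing in Theorem~\ref{p1} and Corollary~\ref{p3} is, on the image of $\Br(K)$, nothing but the field restriction $\Br(k(\lambda))\to\Br(k(P))$ attached to $\Spec k(P)\to\Spec k(\lambda)$, since the class on $\wtl C_\lambda$ is pulled back from $\Spec K$ and restriction commutes with composition of structure maps. As $P$ is a $k(P)$-point of $\wtl C_\lambda$, the class $[\wtl C_\lambda]$ restricts to $0$ in $\Br(k(P))$, so this map descends to $\Br(k(\lambda))/\langle[\wtl C_\lambda]\rangle\to\Br(C^+\cap C^-)$ and carries the sign $\pm1$ inherited from Corollary~\ref{p3}. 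Substituting these identifications into the exact sequence above exhibits $\Br(C)$ as the claimed kernel.

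The main obstacle I anticipate is the conic computation: one must verify that the connecting map $\delta$ carries the generator of $\Pic(\ovl{\wtl C_\lambda})^{\Gamma_K}=\ZZ$ exactly to $[\wtl C_\lambda]$ (equivalently, that $\Pic(\wtl C_\lambda)$ has index $1$ or $2$ in $\ZZ$ according as $\wtl C_\lambda$ does or does not have a $k(\lambda)$-point), and to track the $\pm1$ signs carefully so that the two families of restriction maps assemble into the single map in the statement.
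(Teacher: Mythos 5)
Your proof is correct and takes essentially the same route as the paper: the paper's own proof is exactly the combination of Corollary~\ref{p3} with the well-known fact that the Brauer group of a conic over a field is the quotient of the Brauer group of that field by the cyclic subgroup generated by the class of the conic. The only difference is that you spell out this fact (via the Hochschild--Serre spectral sequence, or Amitsur's theorem) and explicitly check that the component maps descend to the quotients $\Br(k(\lambda))/[\wtl C_\lambda]$, details the paper leaves to the reader.
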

\begin{proof}
This follows directly 
from Proposition \ref{p3} and the well known fact that 
the Brauer group of a conic over $k$ is the quotient of $\Br(k)$
by the cyclic subgroup generated by the class of this conic.
\end{proof}

In some cases one can compute $\Br(C)$ using the Hochschild--Serre
spectral sequence $H^p(k,H^q(\ovl C,\GG_m))\Rightarrow H^{p+q}(C,\GG_m)$.
In~\cite[III, Cor. 1.2]{GB} Grothendieck proved that $\Br(\ovl{C}) = 0$
for any curve $C$. Hence the spectral sequence identifies
the cokernel of the natural map
$\Br(k)\to \Br(C)$ with a subgroup of $H^1(k,\Pic(\ovl C))$.

The structure of $\Gamma_k$-module $\Pic(\ovl C)$ is well known,
at least up to its maximal unipotent subgroup. It is convenient
to describe this structure in combinatorial terms.
Recall that $\ovl\Lambda$, $\ovl \Pi$, $\ovl \Psi$ are the $\bar k$-schemes
obtained from $\Lambda$, $\Pi$, $\Psi$ by extending the ground field 
to $\bar k$.
We associate to $C$ the {\em incidence graph} $X(C)$ defined as the
directed graph whose vertices are $X(C)_0=\ovl \Lambda\cup\ovl \Pi$
and the edges are $X(C)_1=\ovl \Psi$. The edge $Q \in \ovl \Psi$
goes from $L\in \ovl\Lambda$ to $P \in \ovl \Pi$ when $\nu(Q)=P$ and
$Q$ is contained in the irreducible component
$L$ of $\wtl C$. The source and target maps 
$X(C)_1 \to X(C)_0$ can be described as a morphism of $k$-schemes
$$(s,t):\Psi \lrar \Lambda \coprod \Pi,$$
where $t:\Psi\to\Pi$ is induced by $\nu'$, and $s$ is the
composition of the closed immersion $j:\Psi\to\wtl C$ and the canonical morphism
$\wtl C\to\Lambda$.
By construction $X(C)$ is a connected bipartite graph with a
natural action of the Galois group $\Gamma_k$.

For a reduced 0-dimensional $k$-scheme $p:\Sigma\to\Spec(k)$
of finite type, the $k$-group scheme $p_*\GG_{m}$
is an algebraic torus over $k$. If we write
$\Sigma=\coprod_{i=1}^n\Spec(k_i)$, where $k_1,\ldots,k_n$ are finite
field extensions of $k$, then $p_*\GG_{m}$ is
the product of Weil restrictions $\prod_{i=1}^n R_{k_i/k}(\GG_m)$. 
For a reduced 0-dimensional scheme $p':\Sigma'\to\Spec(k)$
of finite type a morphism of $k$-schemes
$f:\Sigma'\to \Sigma$ gives rise to a canonical morphism
$\GG_{m,\Sigma}\to f_*\GG_{m,\Sigma'}$, and hence to
a canonical homomorphism of $k$-tori $p_*\GG_{m}\to p'_*\GG_{m}$,
which we denote by $f^*$.
Let us denote the structure morphism $\Lambda\to\Spec(k)$ by $p_\Lambda$,
and use the same convention for $\Psi$ and $\Pi$.
Let $T$ be the algebraic $k$-torus defined by the exact sequence
$$
1\lrar \GG_{m}\lrar p_{\Lambda *}\GG_m\oplus p_{\Pi *}\GG_m\lrar
p_{\Psi *}\GG_m\lrar T\lrar 1,
$$
where the middle arrow is $s^*(t^*)^{-1}$.
\medskip

\begin{rem} The character group $\hat T$ with its
natural action of the Galois group
$\Gamma_k$, is canonically isomorphic to 
$H_1(X(C),\ZZ)$, the {\em first homology group} of the graph
$X(C)$. Since $X(C)$ is connected,
we have $T=\{1\}$ if and only if $X(C)$ is a tree.
\end{rem}

\begin{prop} \label{P1}
Let $C$ be a reduced, geometrically connected, projective curve over $k$.
We have the following exact sequences of $\Gamma_k$-modules:
\begin{equation}
0\lrar T(\bar k)\lrar \Pic(\ovl {C'})\lrar \Pic(\wtl C\times_k\bar k)
\lrar 0,
\label{b}
\end{equation}
\begin{equation}
0\lrar U(\bar k)\lrar \Pic(\ovl C)\lrar \Pic(\ovl {C'})\lrar 0,
\label{a}
\end{equation}
where $U$ is a commutative unipotent algebraic group over $k$.
\end{prop}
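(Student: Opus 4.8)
The plan is to obtain both exact sequences in Proposition \ref{P1} as the long exact cohomology sequences associated to the two short exact sequences of étale sheaves (\ref{ee1}) and (\ref{ee2}) established in Proposition \ref{exact}, after passing to $\bar k$. For a projective curve $Z$ over $\bar k$ one has $\Pic(Z)=H^1(Z,\GG_m)$ and $H^0(Z,\GG_m)=H^0(Z,\OO_Z)^*$, so Picard groups appear naturally as the degree-$1$ terms. The key preliminary observation is that the two tori/unipotent groups appearing as quotients in (\ref{ee1}) and (\ref{ee2}) are exactly the groups $T$ and $U$ whose definitions precede the Proposition: the torus $T$ is built from the same data $s^*(t^*)^{-1}:p_{\Lambda*}\GG_m\oplus p_{\Pi*}\GG_m\to p_{\Psi*}\GG_m$ that governs (\ref{ee1}), and $U$ is the pushforward $i_*\U$ appearing in (\ref{ee2}). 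I would first pin down these identifications carefully, since the whole statement hinges on recognising the abstractly-defined cokernels as $T(\bar k)$ and $U(\bar k)$.

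First I would treat the sequence (\ref{a}). Base-changing (\ref{ee2}) to $\bar k$ gives a short exact sequence of sheaves on $\ovl C$, and taking étale cohomology yields
\begin{equation*}
H^0(\ovl C,\nu''_*\GG_m)\to H^0(\ovl{\Pi},\ovl{\U})\to \Pic(\ovl C)\to H^1(\ovl C,\nu''_*\GG_m)\to H^1(\ovl{\Pi},\ovl{\U}).
\end{equation*}
Since $\nu''$ is finite its pushforward is exact, so $H^q(\ovl C,\nu''_*\GG_m)=H^q(\ovl{C'},\GG_m)$; in particular the middle map out of $\Pic(\ovl C)$ lands in $\Pic(\ovl{C'})$. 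Because $\ovl C\to\ovl{C'}$ is a universal homeomorphism the global units agree, so the first map is surjective onto the $\GG_a$-type group $H^0(\ovl\Pi,\ovl\U)=U(\bar k)$ modulo the global units of $\ovl{C'}$; combined with surjectivity of $\Pic(\ovl C)\to\Pic(\ovl{C'})$ this gives (\ref{a}) with $U$ the unipotent group of Proposition \ref{exact}(ii). The unipotence of $U$ is already furnished by that Proposition, so nothing new is needed there.

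For (\ref{b}) I would base-change (\ref{ee1}) and take the long exact sequence on $\ovl{C'}$. Hilbert 90 forces $H^1(\ovl\Pi,\GG_m)=0$ and, as in the proof of Theorem \ref{p1}, $H^1(\ovl\Psi,\GG_m)=0$; the degree-$0$ part reads $H^0(\ovl{C'},\GG_m)\to H^0(\wtl{\ovl C},\GG_m)\oplus H^0(\ovl\Pi,\GG_m)\to H^0(\ovl\Psi,\GG_m)\to T(\bar k)\to 0$, which is precisely the defining sequence of $T$ evaluated at $\bar k$ (here I use that the connecting data matches $s^*(t^*)^{-1}$). The degree-$1$ part then gives $0\to T(\bar k)\to\Pic(\ovl{C'})\to\Pic(\wtl{\ovl C})\to 0$, with surjectivity onto $\Pic(\wtl{\ovl C})$ coming from the vanishing of $H^1(\ovl\Pi,\GG_m)$. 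The main obstacle I anticipate is bookkeeping rather than conceptual: I must verify that the boundary identification of the cokernel $T(\bar k)$ is compatible with the combinatorial definition of $T$ through the source/target maps $(s,t)$, i.e.\ that the torus arising from the sheaf sequence is canonically the one defined before the Proposition, including the $\Gamma_k$-action. All maps are $\Gamma_k$-equivariant by functoriality of base change, so equivariance is automatic once the underlying identification is fixed.
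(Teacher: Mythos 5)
Your proposal is correct and takes essentially the same route as the paper: the paper obtains (\ref{b}) and (\ref{a}) by applying the (derived) direct image functor along the structure morphisms to the sheaf sequences (\ref{ee1}) and (\ref{ee2}), which is precisely your long exact cohomology sequence over $\bar k$ together with the $\Gamma_k$-action. Your write-up merely makes explicit the identifications the paper leaves implicit — vanishing of $\Pic$ and $H^1$ on the $0$-dimensional schemes $\ovl\Pi$ and $\ovl\Psi$, equality of the global units of $\ovl C$ and $\ovl{C'}$, and the recognition of the $H^0$-cokernels as $T(\bar k)$ and $U(\bar k)$ (with $U$ the Weil restriction to $k$ of the unipotent group $\U$ over $\Pi$).
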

\begin{proof} 
To obtain (\ref{b}) we apply the direct image functor
with respect to the structure morphism $C'\to\Spec(k)$
to the exact sequence (\ref{ee1}).
The sequence (\ref{a}) is obtained from (\ref{ee2}) in a similar way. 
\end{proof}

\begin{cor} \label{C1}
If $C$ is a conical curve such that $X(C)$ is a tree, then 
$H^1(k,\Pic(\ovl C))=0$, so that the natural map
$\Br(k)\to\Br(C)$ is surjective.
\end{cor}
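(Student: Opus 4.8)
The plan is to compute $H^1(k,\Pic(\ovl C))$ directly from the two exact sequences of Proposition~\ref{P1}, peeling off the unipotent and toric contributions and reducing everything to the Picard group of the normalisation, which for a conical curve is a permutation module. Once $H^1(k,\Pic(\ovl C))=0$ is established, the final assertion is immediate: as recalled before the statement, Grothendieck's vanishing $\Br(\ovl C)=0$ together with the Hochschild--Serre spectral sequence $H^p(k,H^q(\ovl C,\GG_m))\Rightarrow H^{p+q}(C,\GG_m)$ identifies $\coker(\Br(k)\to\Br(C))$ with a subgroup of $H^1(k,\Pic(\ovl C))$, which will then be trivial.

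First I would handle the unipotent part. The group $U$ in $(\ref{a})$ is commutative unipotent over the field $k$ of characteristic $0$, so $U(\bar k)$ admits a $\Gamma_k$-stable filtration whose successive quotients are copies of $\GG_a(\bar k)=\bar k$; since $H^i(k,\bar k)=0$ for all $i>0$ (the additive analogue of Hilbert~$90$, as already used in the proof of Theorem~\ref{p1}), we get $H^i(k,U(\bar k))=0$ for $i\ge 1$. The long exact cohomology sequence of $(\ref{a})$ then yields an isomorphism $H^1(k,\Pic(\ovl C))\tilde\lrar H^1(k,\Pic(\ovl{C'}))$.

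Next I would use the tree hypothesis. By the Remark preceding the Proposition, $\hat T\cong H_1(X(C),\ZZ)$, and a connected graph is a tree precisely when its first homology vanishes; hence $X(C)$ a tree forces $T=\{1\}$, so $T(\bar k)=\{1\}$. The sequence $(\ref{b})$ then degenerates to an isomorphism of $\Gamma_k$-modules $\Pic(\ovl{C'})\cong\Pic(\wtl C\times_k\bar k)$, so it remains to show $H^1(k,\Pic(\wtl C\times_k\bar k))=0$.

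This last step is where the conical hypothesis enters, and it is essentially bookkeeping of Galois-module structures. Writing $\wtl C=\coprod_\lambda\wtl C_\lambda$, the geometric connected components of $\wtl C$ are indexed by the finite $\Gamma_k$-set $\ovl\Lambda$, and each such component is a geometric line $\PP^1_{\bar k}$ because $C$ is conical; as the Galois action preserves degrees, $\Pic(\wtl C\times_k\bar k)$ is the permutation module $\ZZ[\ovl\Lambda]=\bigoplus_\lambda\Ind_{\Gamma_{k(\lambda)}}^{\Gamma_k}\ZZ$. By Shapiro's lemma $H^1(k,\Ind_{\Gamma_{k(\lambda)}}^{\Gamma_k}\ZZ)=H^1(k(\lambda),\ZZ)=\Hom_{\mathrm{cont}}(\Gamma_{k(\lambda)},\ZZ)$, and this vanishes because a continuous homomorphism from a profinite group to the discrete torsion-free group $\ZZ$ has finite, hence trivial, image. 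Therefore $H^1(k,\Pic(\wtl C\times_k\bar k))=0$, and tracing back the two isomorphisms gives $H^1(k,\Pic(\ovl C))=0$, completing the argument. I expect no serious obstacle here: the only genuinely non-formal inputs are the two cited exact sequences, the characteristic-zero vanishing of unipotent cohomology, and the triviality of $H^1$ of a profinite group with $\ZZ$-coefficients; the main care needed is simply to keep the $\Gamma_k$-module structures straight through the reductions.
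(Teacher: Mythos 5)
Your proof is correct and takes essentially the same route as the paper: use sequence (\ref{a}) and the vanishing of cohomology of commutative unipotent groups in characteristic zero to reduce to $\Pic(\ovl{C'})$, use the tree hypothesis to get $T=\{1\}$ so that (\ref{b}) identifies $\Pic(\ovl{C'})$ with $\Pic(\wtl C\times_k\bar k)$, and conclude because the latter is a permutation $\Gamma_k$-module with vanishing $H^1$. The only difference is expository: you justify that last vanishing via Shapiro's lemma and $\Hom_{\mathrm{cont}}(\Gamma_{k(\lambda)},\ZZ)=0$, where the paper simply asserts it from the existence of a $\Gamma_k$-stable $\ZZ$-basis.
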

\begin{proof}
Since $k$ has characteristic 0, we have $H^n(k,U)=0$ for $n>0$. 
Thus (\ref{a}) gives an isomorphism 
$H^1(k,\Pic(\ovl C))=H^1(k,\Pic(\ovl {C'}))$.
We have $T=\{1\}$, hence $\Pic(\ovl {C'})= \Pic(\wtl C\times_k\bar k)$.
The curve $\wtl C$ is the disjoint union of conics defined
over finite extensions of $k$, thus the free abelian group
$\Pic(\wtl C\times_k\bar k)$ has a natural $\Gamma_k$-stable $\ZZ$-basis.
Hence $H^1(k,\Pic(\wtl C\times_k\bar k))=0$.
\end{proof}

\section{Weak approximation} \label{WA}

Let $k$ be a number field. Recall that the \'etale Brauer--Manin
set $X(\A_k)^{\et,\Br}$ is the set of adelic points $(M_v)\in X(\A_k)$
satisfying the following property:
for any torsor $f:Y\to X$ of a finite $k$-group scheme $G$
there exists a $k$-torsor $Z$ of $G$ such that $(M_v)$
is the image of an adelic point in the Brauer--Manin set of $(Y\times_k Z)/G$.
Here $G$ acts simultaneously on both factors, and the morphism
$(Y\times_k Z)/G\to X$ is induced by $Y\to X$. It is clear that
the \'etale Brauer--Manin obstruction is a functor from the category of 
varieties over $k$ to the category of sets. Note that
$(Y\times_k Z)/G\to X$ is a torsor of an inner form of $G$, called
the twist of $Y/X$ by $Z$, see \cite[pp. 20--21]{Sk} for details.

In this section we construct a simple example
of a smooth projective surface $X$ over $k$ such that 
$X(\A_k)^{\et,\Br}$ is infinite but $X(k)$ contains only one point. Thus
$X(k)$ is far from dense in $X(\A_k)^{\et,\Br}$; in fact, 
infinitely many points 
of $X(\A_k)^{\et,\Br}$ are contained in the complement to $X(k)$ in $X$.

We start with the following statement which shows that on an everywhere locally
soluble conical curve $C$ such that $X(C)$ is a tree
all the adelic points survive the \'etale Brauer--Manin obstruction.

\begin{prop}\label{p-basic}
Let $k$ be a number field, and let $C$ be a conical curve
over $k$ such that $X(C)$ is a tree and $C(\A_k)\not=\emptyset$.
Then
\begin{enumerate}
\item $C(k)\not=\emptyset$;
\item the natural map $\Br(k)\to\Br(C)$ is an isomorphism;
\item $C(\A_k)^{\et,\Br} = C(\A_k)$.
\end{enumerate}
\end{prop}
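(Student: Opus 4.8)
The plan is to prove the three assertions in the order $(2)$-via-$(1)$, $(1)$, $(3)$, since the Brauer computation is already essentially done and the geometry of conical trees should make $k$-points cheap to extract. First I would establish $(1)$. The curve $C$ is a tree of conics: $\wtl C = \coprod_\lambda \wtl C_\lambda$ is a disjoint union of conics over the fields $k(\lambda)$, glued along the finite scheme $\Pi$ according to the incidence graph $X(C)$, which by hypothesis is a tree. The key observation is that a conic over a number field with a point in every completion has a rational point (Hasse--Minkowski for quadratic forms in three variables). So if I can show every component $\wtl C_\lambda$ is everywhere locally soluble, each acquires a $k(\lambda)$-point and hence a $k$-point on $C$. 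To propagate local solubility from the adelic point on $C$ to the individual components, I would use that an adelic point $(M_v)\in C(\A_k)$ lands, at each place $v$, in some component; a connectivity/tree argument on $X(C)$ — walking along edges of the graph, which are the glued points in $\Pi$ and hence defined over the relevant residue fields — lets me conclude that a local point on one branch forces local solubility of the conics meeting it, and because the graph is connected this spreads to all vertices. This graph-walking step, transferring adelic solubility across the nodes of the tree, is where I expect the main subtlety to lie: I must be careful that the gluing points $\pi\in\Pi$ give honest local points on the conics lying above them (via $\Psi$), and that being a tree (no cycles) is exactly what prevents an obstruction to lifting.

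Next, $(2)$ is nearly immediate from the results already in the excerpt. By Corollary \ref{C1}, since $C$ is conical and $X(C)$ is a tree, we have $H^1(k,\Pic(\ovl C))=0$, so the natural map $\Br(k)\to\Br(C)$ is surjective. For injectivity I would use that $C$ has a $k$-point — which is precisely what $(1)$ provides — giving a section $\Spec(k)\to C$ of the structure morphism, whence $\Br(k)\to\Br(C)$ is a split injection. Combining surjectivity and injectivity yields the isomorphism, so $(2)$ follows once $(1)$ is in hand.

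Finally, for $(3)$ I would argue that the \'etale Brauer--Manin obstruction is vacuous here. Since $C$ is conical (geometrically a union of rational curves) and $X(C)$ is a tree, $\ovl C$ is simply connected: geometrically it is a tree of $\PP^1$'s, which is contractible and in particular has no nontrivial finite \'etale covers. Therefore every torsor $f:Y\to C$ under a finite $k$-group scheme $G$ is, after twisting by a suitable $k$-torsor $Z$, geometrically trivial, i.e.\ $(Y\times_k Z)/G\to C$ admits a section over $\ovl k$; more precisely the only finite descent data come from $H^1(k,G)$ and not from $\pi_1(\ovl C)$. Consequently $C(\A_k)^{\et,\Br}$ reduces to the ordinary Brauer--Manin set $C(\A_k)^{\Br}$ of $C$ itself (for the relevant twist), and by $(2)$ the Brauer group contributes nothing beyond $\Br(k)$, on which the reciprocity pairing is trivial by global class field theory. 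Hence no adelic point is killed and $C(\A_k)^{\et,\Br}=C(\A_k)$. The delicate point in $(3)$ is justifying that simple connectivity of $\ovl C$ really trivialises the finite \'etale part; I would verify this by a direct van Kampen--type computation of $\pi_1(\ovl C)$ for a tree of projective lines, using that each $\ovl{\PP^1}$ is simply connected and the gluing pattern has no loops.
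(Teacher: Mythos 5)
Your parts (2) and (3) are essentially the paper's own arguments: (2) is surjectivity from Corollary \ref{C1} plus injectivity from the retraction given by a $k$-point, and (3) is the paper's route (a torsor under a finite group scheme trivialises over each geometric component, hence over all of $\ovl C$ because $X(C)$ is a tree; after twisting it acquires a section, and one concludes via $\Br(C)=\Br(k)$). In (3) you should be aware that geometric triviality alone is not enough: to lift adelic points you need a section \emph{over $k$} of the twisted torsor, and the paper gets this by twisting so that the fibre over the $k$-point $P$ has a $k$-point $Q$, and then observing that the connected component of $\ovl\T$ through $Q$ is Galois-stable, hence defined over $k$. Your phrase ``the only finite descent data come from $H^1(k,G)$'' is the right intuition but is not a proof of this step.

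The genuine gap is in (1), and it is twofold. First, your intermediate claim that $C(\A_k)\neq\emptyset$ forces \emph{every} component $\wtl C_\lambda$ to be everywhere locally soluble is not established by the proposed ``graph-walking'': a $k_v$-point of $C$ sitting on one branch gives points of adjacent branches only through the nodes, and the nodes need not be $k_v$-rational (their residue fields can be nontrivial extensions of $k_v$), so local solubility does not naively propagate across edges. Second, and more fatally, even if each conic $\wtl C_\lambda$ had a $k(\lambda)$-point, the conclusion ``hence a $k$-point on $C$'' is a non sequitur whenever $k(\lambda)\neq k$: a $k(\lambda)$-point of $\wtl C_\lambda$ is only a closed point of $C$ of degree $[k(\lambda):k]$. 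Consider the star of $r>1$ conjugate lines through a single $k$-rational node: no component is geometrically integral, every component has rational points only over a proper extension of $k$, and the unique $k$-point of $C$ is the node itself, which your argument never produces. What is missing is exactly the paper's key idea: the fixed-point theorem for a finite group acting on a finite connected tree, applied to $\Gamma_k$ acting on $X(C)$. This yields either a Galois-fixed vertex in $\ovl\Pi$ or a fixed edge in $\ovl\Psi$ (giving a $k$-point of $C$ outright), or a fixed vertex in $\ovl\Lambda$, i.e.\ a \emph{geometrically integral} component $C_0$. Only for this distinguished component does one prove local solubility, and the tree structure is used precisely there: each connected component of $\ovl C\setminus\ovl C_0$ meets $\ovl C_0$ in exactly one point, so if $C_0(k_v)=\emptyset$, the $k_v$-point of $C$ lies in some branch, $\Gamma_{k_v}$ fixes that branch, hence fixes its unique attaching point, which is then a $k_v$-point of $C_0$ --- a contradiction. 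Then Hasse--Minkowski applies to $\wtl C_0$, and the tree hypothesis is used once more to see that $\wtl C_0\to C_0$ is a bijection on points, so the $k$-point descends to $C\subset$. Without the fixed-point theorem you have no way to single out a Galois-stable object on which to run the local-global argument, and your proof of (1) does not go through.
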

\begin{rem} \label{2.2}
Proposition \ref{p-basic} (1) implies that
{\em geometrically connected and simply connected projective
curves over number fields satisfy the Hasse principle}.
It is easy to see that this statement 
does not generalise to higher dimension. Consider
a conic $C\subset\PP^2_k$ without a $k$-point, and choose
a quadratic extension $K/k$ so that all the places $v$ 
with $C(k_v)=\emptyset$ are split in $K$. Then
the union of two planes conjugate over $K$ and intersecting
at $C$ is a geometrically connected and 
simply connected projective surface that is a counterexample
to the Hasse principle.
\end{rem}

\medskip

\begin{proof}[Proof of Proposition \ref{p-basic}]
Let us prove (1).
It is well known that any group acting on a finite connected tree
fixes a vertex or an edge. (This is easily proved by induction
on the diameter of a tree, that is, on the length of a longest
path contained in it.)
We apply this to the action of the Galois
group $\Gamma_k$ on $X(C)$. If $\Gamma_k$ fixes a point of $\ovl\Pi$
or $\ovl\Psi$, then $C(k)\not=\emptyset$.
If $\Gamma_k$ fixes a point of $\ovl\Lambda$, then $C$ has an
irreducible component $C_0$ which is a geometrically integral
geometrically rational curve. Let $\wtl C_0$ be the normalisation of $C_0$.
Since $X(C)$ is a tree, the morphism $\wtl C_0\to C_0$ 
is a bijection on points.
Thus if we can prove that $C_0(\A_k)\not=\emptyset$, then 
$\wtl C_0(\A_k)\not=\emptyset$, and by the Hasse--Minkowski theorem
$\wtl C_0(k)\not=\emptyset$, so that finally $C_0(k)\not=\emptyset$.
Since $X(C)$ is a connected tree, each connected component of 
$\ovl C\setminus \ovl C_0$ meets $\ovl C_0$ in exactly one point. 
Let $k_v$ be a completion of $k$.
Suppose that $C_0(k_v)=\emptyset$. Since $C(k_v)\not=\emptyset$,
at least one of the connected components of 
$\ovl C\setminus \ovl C_0$ is fixed by the Galois group $\Gamma_{k_v}$,
and hence it intersects $C_0$ in a $k_v$-point. This contradiction
proves (1).

By (1) the natural map $\Br(k)\to\Br(C)$ has a retraction, and so is injective,
but it is also surjective by Corollary \ref{C1}. This proves (2).

Let us prove (3).
Let $G$ be a finite $k$-group scheme, and let $\T \to C$ be
a torsor of $G$. Fix a $k$-point $P$ in $C$.
The fibre $\T_P$ is then a $k$-torsor.
The twist of $\T$ by $\T_P$ is
the quotient of $\T\times_k \T_P$ by the diagonal action of $G$.
This is a $C$-torsor of an inner form of $G$ such that
the fibre at $P$ has a $k$-point, namely the quotient by $G$ of the diagonal
in $\T_P\times_k \T_P$.
Thus, twisting $\T$ by a $k$-torsor of $G$, and replacing $G$ by an
inner form we can assume that $\T_P$ contains a $k$-point $Q$.
Since all irreducible components of $\ovl C$ are homeomorphic
to $\PP^1_{\bar k}$, the torsor $\ovl\T\to \ovl C$
trivialises over each component of $\ovl C$. But $X(C)$ is a tree, 
and this implies that the torsor
$\ovl\T\to \ovl C$ is trivial, that is,
$\ovl \T\simeq (C\times_k G)\times_k\bar k$. The connected
component of $\ovl\T$ that contains $Q$ is thus defined
over $k$, and hence it gives a section $s$ of $\T\to C$ such that 
$s(P)=Q$. Hence any adelic point on $C$ lifts to an
adelic point on $s(C)\subset \T$. Since $\Br(C)=\Br(k)$
we conclude that $C(\A_k)$ is contained in, and hence is equal to the \'etale
Brauer--Manin set $C(\A_k)^{\et,\Br}$.
\end{proof}

Let $k$ be a number field, and let $f(x,y)$ be a separable homogeneous
polynomial such that its
zero locus $Z^f \subseteq \PP^1_{k}$ is a $0$-dimensional scheme
violating the Hasse principle. It is easy to see that such a polynomial
exists for any number field $k$. For example, for $k=\QQ$ one can take
\begin{equation}
 f(x,y) = (x^2-2y^2)(x^2-17y^2)(x^2-34y^2).\label{e1}
\end{equation}
For an arbitrary number field $k$ the following polynomial will do:
$$f(x,y) = (x^2-ay^2)(x^2-by^2)(x^2-aby^2)(x^2-cy^2),$$
where $a,b,c\in k^*\setminus k^{*2}$ are such that $ab\notin k^{*2}$,
whereas $c\in k_v^{*2}$
for all places $v$ such that $\val_v(a)\not=0$ or $\val_v(b)\not=0$, 
and also for
the archimedean places, and the places with residual characteristic 2. (For
fixed $a$ and $b$ one finds $c$ using weak approximation.) Let $d=\deg(f)$.

Let $C^f \subseteq \PP^2_k$ be the curve with the equation
$f(x,y) = 0$. It is geometrically connected and has a unique singular point
$P = (0:0:1) \in C^f \subset \PP^2_k$ which is contained in
all irreducible components of $C^f$.
Since $Z^f(k)=\emptyset$ we see that $P$ is the only
$k$-point of $C^f$. The intersection of $C^f$ with any line
in $\PP^2_k$ that does not pass through $P$ is isomorphic to $Z^f$,
hence {\em the smooth locus of $C^f$ contains an infinite subset
of $C^f(\A_k)=C^f(\A_k)^{\et,\Br}$}, where the equality is due to 
Proposition~\ref{p-basic} (3).

Now let $g(x,y,z)$ be a homogeneous polynomial over $k$ of the same degree
as $f(x,y)$ such that the subset of $\PP^2_k$ given by
$g(x,y,z)=f(x,y)=0$ consists of $d^2$ distinct $\bar k$-points.
Consider the projective surface
$Y \subseteq \PP^2_k \times \PP^1_k$ given by the equation
$$\lambda f(x,y) + \mu g(x,y,z) = 0,$$
where $(\lambda:\mu)$ are homogeneous coordinates on $\PP^1_k$.
One immediately checks that $Y$ is smooth.
The fibre of the projection $Y\to \PP^1_k$ over $\infty=(1:0)$ is $C^f$.

Let $E$ be a smooth, geometrically integral, projective curve over $k$
containing exactly one $k$-rational point $M$. By \cite{P2}
such curves exist over any global field $k$.
Choose a dominant morphism $\vphi: E \to \PP^1_k$
such that $\vphi(M)=\infty$, and $\vphi$ is not ramified over all the
points of $\PP^1_k$ where $Y$ has a singular fibre (including $\infty$).
Define
\begin{equation}
X = E \times_{\PP^1_k} Y. \label{e3}
\end{equation}

\begin{prop} \label{aa}
The surface $X$ is smooth, projective and geometrically integral.
The set $X(k)$ has exactly one point, whereas the set
$X(\A_k)^{\et,\Br}$ is infinite. Furthermore, an infinite subset
of $X(\A_k)^{\et,\Br}$ is contained in the Zariski open
set $X\setminus X(k)$.
\end{prop}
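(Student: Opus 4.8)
The plan is to reduce Proposition~\ref{aa} to a statement about the single fibre of $X\to E$ over the unique $k$-point $M\in E(k)$, which by construction is the singular curve $C^f$, and then to combine Proposition~\ref{p-basic} with the functoriality of the \'etale Brauer--Manin set. First I would dispose of the geometric properties. Projectivity is immediate, since $X$ is a closed subscheme of the projective variety $E\times_k Y$. For smoothness I would argue in two regions. Over the open locus of $\PP^1_k$ where $\pi\colon Y\to\PP^1_k$ has smooth fibres, the projection $X\to E$ is the base change of the smooth morphism $\pi$, hence smooth, and composing with the smoothness of $E/k$ shows $X/k$ is smooth there. Over the finitely many points $t_i\in\PP^1_k$ carrying a singular fibre of $\pi$ (including $\infty$), the hypothesis that $\vphi$ is unramified over the $t_i$ means that $\vphi$ is \'etale (being a finite morphism of smooth curves in characteristic $0$) near $\vphi^{-1}(t_i)$; hence the projection $X\to Y$ is \'etale over the relevant part of $Y$, and an \'etale scheme over the smooth surface $Y$ is smooth over $k$. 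These two regions cover $X$. For geometric integrality I would note that projection to $\PP^2_k$ realises $Y$ as the blow-up of $\PP^2_k$ at the $d^2$ base points of the pencil, so $Y$ is geometrically integral with geometrically integral generic fibre over $\PP^1_k$; base-changing along $E$ and using that $E$ is geometrically integral then gives that $X$ is geometrically integral.

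Next I would count $k$-points and identify the relevant fibre. A $k$-point of $X$ is a pair $(e,y)$ with $e\in E(k)$, $y\in Y(k)$ and $\vphi(e)=\pi(y)$. Since $E(k)=\{M\}$ and $\vphi(M)=\infty$, any such point has $y$ in the fibre $Y_\infty=C^f$, and $C^f(k)=\{P\}$; hence $X(k)=\{(M,P)\}$ is a single point. The same computation shows that the scheme-theoretic fibre of $X\to E$ over $M$ is exactly $Y_\infty=C^f$, so there is a closed immersion $\iota\colon C^f\hookrightarrow X$ onto this fibre.

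The heart of the argument is parts (3)--(4). The curve $C^f$ is conical, and its incidence graph $X(C^f)$ is a star: the $d$ geometric components of $\ovl{C^f}$ are lines all meeting only at the single singular point $P$, so the graph has one $\ovl\Pi$-vertex joined to $d$ leaves, which is a tree. Moreover $C^f(\A_k)\neq\emptyset$. Proposition~\ref{p-basic}(3) therefore gives $C^f(\A_k)^{\et,\Br}=C^f(\A_k)$, and we already know that the smooth locus of $C^f$ contains an infinite subset of $C^f(\A_k)$. Now I would invoke the functoriality of $V\mapsto V(\A_k)^{\et,\Br}$ recorded above: applied to $\iota$ it yields $\iota_*\big(C^f(\A_k)^{\et,\Br}\big)\subseteq X(\A_k)^{\et,\Br}$. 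Since $\iota$ is a closed immersion, it is injective on adelic points, so the image of the infinite set of adelic points supported on the smooth locus of $C^f$ is an infinite subset of $X(\A_k)^{\et,\Br}$. Finally, each such local component is a smooth point of $C^f$, hence different from $P$, so every image point equals $(M,\cdot)\neq(M,P)$ locally; thus this infinite subset lies in $(X\setminus X(k))(\A_k)$, which is precisely the last assertion.

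I expect the main obstacle to be the smoothness verification over the singular fibres: one must use the unramifiedness of $\vphi$ in an essential way to see that base change introduces no singular points of $X$ above the critical points of $\pi$, the rest of the smooth locus being handled by the smooth-fibre base change. Everything else is either a direct computation (the $k$-point count, the star shape of the incidence graph) or a formal consequence of Proposition~\ref{p-basic} together with the already-granted functoriality of the \'etale Brauer--Manin obstruction.
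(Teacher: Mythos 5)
Your proposal is correct and follows essentially the same route as the paper: identify the fibre of $X\to E$ over the unique $k$-point $M$ with the singular curve $C^f$ (giving $X(k)=\{(M,P)\}$), apply Proposition~\ref{p-basic}(3) to get $C^f(\A_k)^{\et,\Br}=C^f(\A_k)$, and push the infinite set of adelic points supported on the smooth locus of $C^f$ into $X(\A_k)^{\et,\Br}$ via functoriality of the \'etale Brauer--Manin set under the closed immersion $\iota$. The only difference is that you spell out the smoothness, projectivity and geometric integrality of $X$ (correctly, via the unramifiedness of $\vphi$ over the singular values of $Y\to\PP^1_k$), verifications the paper absorbs into the construction itself rather than into the proof of the proposition.
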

\begin{proof}
The inverse image of the unique $k$-point of $E$ in $X$ is $C^f$,
hence $X$ has exactly one $k$-point. Since the \'etale
Brauer--Manin obstruction is a functor from the category of 
$k$-varieties to the category of sets, the inclusion 
$\iota:C^f \hrar X$ induces an inclusion
$$ \iota_*: C^f(\A_k)=C^f(\A_k)^{\et,\Br} \hrar X(\A_k)^{\et,\Br}. $$
We conclude that $X(\A_k)^{\et,\Br}$ contains infinitely many
points that belong to the Zariski open set $X\setminus X(k)$.
\end{proof}

\begin{rem} Following a suggestion of Ambrus P\'al 
made in response to the first version of this paper we now sketch
a more elementary construction of a counterexample to weak approximation
not explained by the \'etale Brauer--Manin obstruction. Consider 
any irreducible binary quadratic form $f(x,y)$ over $k$. Then our
method produces a smooth, projective and geometrically integral surface
$X$ fibred into conics over $E$. The fibre over the unique $k$-point
of $E$ is the irreducible singular conic $C^f$, hence the set $X(k)$ 
consists of the singular point of $C^f$. Let $D$ be the discriminant of $f(x,y)$.
If $v$ is a finite place of $k$ that splits in $k(\sqrt{D})$,
then $C^f(k_v)$ is the union of two projective lines over $k_v$.
Thus $C^f(\A_k)=C^f(\A_k)^{\et,\Br}$ is infinite, and hence so
is $X(\A_k)^{\et,\Br}$. This construction gives a surface of simpler
geometric structure than the surface in 
Proposition \ref{aa}, but it does not possess the stronger
property of the previous example: here no element of
$X(\A_k)^{\et,\Br}$ is contained in the Zariski 
open set $X\setminus X(k)$.
For this argument we assume that the Jacobian of $E$
has rank $0$ and a finite Shafarevich--Tate
group, e.g. $E$ is an elliptic curve over $\QQ$ of analytic rank $0$.
Let $(M_v)$ be an adelic
point in $X(\A_k)^{\et,\Br}$. Its image $(N_v)$ in $E$
is contained in $E(\A_k)^{\Br}$, but this set is just the connected
component of $0=E(k)$ in $E(\A_k)$, see \cite[Prop. 6.2.4]{Sk}. 
Thus for all finite places $v$ we have $N_v=0$. For any place 
$v$ that does not split in $k(\sqrt{D})$, this
implies $M_v\in X(k)$. 
\end{rem}

\section{The Hasse principle} \label{HP}

In this section we construct a smooth projective surface $X$
over an arbitrary number field $k$ such that $X(\A_k)^{\et,\Br}$ 
is infinite and $X(k)$ is empty. 
This means that $X$ does not satisfy the Hasse 
principle and this failure is not explained by the \'etale
Brauer--Manin obstruction. 

Let $f(x,y)$ and $Z^f$ be as in the previous section.
The scheme $Z^f$ is the disjoint union of $\Spec(K_i)$ for $i=1,\ldots,n$,
where $K_i$ is a finite extension of $k$. We assume $d=\deg(f)\geq 5$.
(For $k=\QQ$ one can take $f(x,y)$ of degree $6$ as in (\ref{e1}),
and in general degree $8$ will suffice.)
We choose field extensions $L/k$ and $F/k$ such that
$L\otimes_k K_i$ and $F\otimes_kK_i$ are fields for all
$i=1,\ldots, n$, and, moreover, $[L:k]=d/2-1$, $[F:k]=d/2$
if $d$ is even, and $[L:k]=(d-1)/2$, $[F:k]=(d+1)/2$ if $d$ is odd.
Fix an embedding $$\Spec(L)\coprod\Spec(F)\hookrightarrow\PP^1_k$$ and
let $D$ be the following curve in $\PP^1_k\times \PP^1_k$:
$$D=\big(Z^f\times \PP^1_k\big)\cup \big(\PP^1_k\times \Spec(L)\big)\cup
\big(\PP^1_k\times\Spec(F)\big).$$
This is a bipartite conical curve without $k$-points, see Definition \ref{bi}.
The class of $D$ in $\Pic(\PP^1_k\times \PP^1_k)$ is
$(d,d-1)$ or $(d,d)$ depending on the parity of $d$.  

\begin{prop}
The natural map $\Br(k)\to \Br(D)$ is an isomorphism.
\end{prop}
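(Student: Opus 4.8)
The plan is to realise $D$ as a bipartite conical curve and feed it into Corollary \ref{c4}. Write $D=C^+\cup C^-$ with $C^+=Z^f\times\PP^1_k$ the union of the ``vertical'' lines and $C^-=\PP^1_k\times(\Spec(L)\coprod\Spec(F))$ the union of the ``horizontal'' lines. Both $C^\pm$ are smooth (disjoint unions of projective lines, over the $K_i$ resp. over $L$ and $F$), they share no component, and they meet transversally in the finite set of nodes, so $D$ is bipartite conical in the sense of Definition \ref{bi}. Here $\Lambda^+$ consists of the components $\Spec(K_i)\times\PP^1$ and $\Lambda^-$ of the two components $\PP^1\times\Spec(L)$ and $\PP^1\times\Spec(F)$; every component is a split conic, so each class $[\wtl C_\lambda]$ vanishes. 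Because $K_i\otimes_kL$ and $K_i\otimes_kF$ are fields by construction, the intersection is $C^+\cap C^-=\coprod_i\Spec(K_iL)\coprod\coprod_i\Spec(K_iF)$, and Corollary \ref{c4} identifies $\Br(D)$ with the kernel of
$$\bigoplus_i\Br(K_i)\oplus\Br(L)\oplus\Br(F)\lrar\bigoplus_i\Br(K_iL)\oplus\bigoplus_i\Br(K_iF),$$
sending $((\alpha_i),\beta_L,\beta_F)$ to $\big(\res\alpha_i-\res\beta_L\big)_i$ and $\big(\res\alpha_i-\res\beta_F\big)_i$. The natural map $\Br(k)\to\Br(D)$ is $\alpha\mapsto(\res_k^{K_i}\alpha,\res_k^L\alpha,\res_k^F\alpha)$, and it lands in this kernel since both restrictions to each $K_iL$ and $K_iF$ equal $\res_k^{K_iL}\alpha$, resp. $\res_k^{K_iF}\alpha$.

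Injectivity is where I would first spend the degree hypothesis. If $\alpha\in\Br(k)$ dies in $\Br(D)$ then in particular $\res_k^L\alpha=0$ and $\res_k^F\alpha=0$; applying corestriction gives $[L:k]\alpha=\operatorname{cores}_{L/k}\res_k^L\alpha=0$ and likewise $[F:k]\alpha=0$. As $[L:k]$ and $[F:k]$ are consecutive integers by our choice of $L$ and $F$, they are coprime and hence $\alpha=0$.

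Surjectivity is the heart of the matter. Fix a kernel element $((\alpha_i),\beta_L,\beta_F)$. The same corestriction argument over each $K_i$ shows $\Br(K_i)\to\Br(K_iL)\oplus\Br(K_iF)$ is injective (the local degrees $[K_iL:K_i]=[L:k]$ and $[K_iF:K_i]=[F:k]$ are coprime), so each $\alpha_i$ is already determined by $(\beta_L,\beta_F)$; the real content is to descend the pair $(\beta_L,\beta_F)$ to $\Br(k)$. For this I would exploit that $K_i\otimes_kL$ is a field: the square with corners $k,L,K_i,K_iL$ is Cartesian, so transfer commutes with base change, $\operatorname{cores}_{K_iL/L}\circ\res_{K_i}^{K_iL}=\res_k^L\circ\operatorname{cores}_{K_i/k}$. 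Applying $\operatorname{cores}_{K_iL/L}$ to $\res_{K_i}^{K_iL}\alpha_i=\res_L^{K_iL}\beta_L$ yields $\res_k^L\gamma_i=[K_i:k]\beta_L$, where $\gamma_i:=\operatorname{cores}_{K_i/k}\alpha_i\in\Br(k)$, and symmetrically $\res_k^F\gamma_i=[K_i:k]\beta_F$. Summing over $i$ and setting $\gamma=\sum_i\gamma_i\in\Br(k)$, $d=\sum_i[K_i:k]=\deg f$, produces a single class with
$$\res_k^L\gamma=d\,\beta_L,\qquad \res_k^F\gamma=d\,\beta_F.$$

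The remaining, and I expect hardest, step is to divide by $d$: to manufacture $\alpha\in\Br(k)$ with $\res_k^L\alpha=\beta_L$ and $\res_k^F\alpha=\beta_F$ out of a class whose restrictions are only the $d$-fold multiples. Here I would pass to local invariants. The relation $\res_k^L\gamma=d\beta_L$ reads $[L_w:k_v]\inv_v\gamma=d\,\inv_w\beta_L$ for each place $w\mid v$, which suggests defining $\inv_v\alpha:=a_v$ by $[L_w:k_v]a_v=\inv_w\beta_L$ and $[F_{w'}:k_v]a_v=\inv_{w'}\beta_F$. Two things must then be checked, and both are where the construction is rigged: first, that such a local $a_v$ exists and is common to all places of $L$ and of $F$ above $v$ — this consistency is forced by the compatibility relations through the $\alpha_i$ together with the fact that the $K_i\otimes_kL$ and $K_i\otimes_kF$ are fields, which pins down the relevant local degrees; and second, that $\sum_v a_v=0$, so that $(a_v)$ is a genuine class in $\Br(k)$ — this global reciprocity is again exactly where $\gcd([L:k],[F:k])=1$ enters. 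Once $\alpha$ is produced, $\res_k^{K_i}\alpha$ satisfies the same defining relations as $\alpha_i$, so by the uniqueness noted above it equals $\alpha_i$, and $\alpha$ maps to the given element. I expect the bookkeeping of local degrees in the consistency check to be the main obstacle, with the coprimality of $[L:k]$ and $[F:k]$ being the single feature that makes both the consistency and the reciprocity come out.
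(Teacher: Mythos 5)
Your setup via Corollary \ref{c4}, your injectivity argument, and your reduction of surjectivity to descending the pair $(\beta_L,\beta_F)$ to $\Br(k)$ (with the $\alpha_i$ then recovered automatically from the coprimality of $[L:k]$ and $[F:k]$) all match the paper or are correct variants of its steps. The genuine gap is in the one step you defer: the existence, at each place $v$, of a common $a_v\in\Br(k_v)$ with $[L_w:k_v]a_v=\inv_w(\beta_L)$ and $[F_{w'}:k_v]a_v=\inv_{w'}(\beta_F)$ for all places $w,w'$ above $v$. You assert that this ``consistency is forced by the compatibility relations through the $\alpha_i$ together with the fact that the $K_i\otimes_kL$ and $K_i\otimes_kF$ are fields'', but that is not so: whether $K_i\otimes_kL$ is a field is a global condition and imposes no constraint on the splitting behaviour at any given place, so it cannot ``pin down the relevant local degrees''. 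Concretely, at a place $v$ where $L\otimes_kk_v$ is split and every completion $K_{i,u}$ is quadratic over $k_v$, the kernel relations read $\inv_u(\alpha_i)=2\inv_w(\beta_L)$ for every $w\mid v$; these determine the invariants $\inv_w(\beta_L)$ only up to $2$-torsion and allow them to differ at different $w\mid v$, whereas any class restricted from $\Br(k_v)$ has equal invariants at all such $w$. Nothing in your argument excludes this, and the relations through $F$ cannot help, since they do not involve the $\inv_w(\beta_L)$ at all. The coprimality of $[L:k]$ and $[F:k]$, which you name as ``the single feature'' making consistency come out, is used in the paper only for the reciprocity step and for injectivity/uniqueness; it is powerless against this local failure.

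The missing idea --- and the place where the construction is actually rigged --- is the hypothesis that $Z^f$ violates the Hasse principle, hence is \emph{everywhere locally soluble}: for each place $v$ there is an index $i$ and a place $w_0$ of $K_i$ with $K_{i,w_0}=k_v$. The paper defines $a_v:=\inv_{w_0}(\alpha_i)\in\Br(k_v)$ using this degree-one place; the kernel relation $\res(\alpha_i)=\res(\beta_L)$ in $\Br(L\otimes_kK_i)$, localized at places above $w_0$, then yields exactly the consistency $\inv_w(\beta_L)=[L_w:k_v]\,a_v$ for all $w\mid v$ (and likewise for $\beta_F$), with no bookkeeping at all. After that, global reciprocity applied to $\beta_L\in\Br(L)$ and $\beta_F\in\Br(F)$ gives $[L:k]\sum_v\inv_v(a_v)=[F:k]\sum_v\inv_v(a_v)=0$, so coprimality yields $\sum_v\inv_v(a_v)=0$; class field theory produces $\alpha\in\Br(k)$ with invariants $a_v$, and the injectivity of $\Br(L)\to\bigoplus_v\Br(L\otimes_kk_v)$ identifies $\res(\alpha)=\beta_L$, and similarly $\res(\alpha)=\beta_F$. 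Your corestriction detour producing $\gamma$ with $\res(\gamma)=d\,\beta_L$ is correct but does not advance matters: dividing by $d$ compatibly with both restrictions is exactly the original problem, and it is the local solubility of $Z^f$, not the coprimality, that solves it.
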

\begin{proof} 
Since $D(L)\not=\emptyset$, the natural map
$\Br(L)\to \Br(D\times_k L)$ has a retraction, and so is injective.
The composition of the restriction $\Br(k)\to\Br(L)$ and the corestriction
$\Br(L)\to\Br(k)$ is the multiplication by $[L:k]$, hence for any $x$ in
the kernel of the natural map $\Br(k)\to\Br(D)$ we have $[L:k]x=0$.
Similarly, $D(F)\not=\emptyset$ implies that $[F:k]x=0$.
But $[F:k]=[L:k]+1$, therefore the natural map
$\Br(k)\to \Br(D)$ is injective. By Corollary \ref{c4} 
we need to prove that $\Br(k)$ is the kernel of the map
$$\Br(L)\oplus \Br(F)\oplus \bigoplus_{i=1}^n\Br(K_i)
\lrar \bigoplus_{i=1}^n\Br(LK_i) \oplus \bigoplus_{i=1}^n\Br(FK_i). $$ 
Recall that the maps $\Br(L)\to \Br(LK_i)$ are the restriction maps, whereas
$\Br(K_i)\to\Br(LK_i)$ are opposites of
the restriction maps. The same convention applies with $F$ in place of $L$.

Suppose that we have $\alpha_i\in\Br(K_i)$, $i=1,\ldots,n$,
$\beta\in\Br(L)$ and $\gamma\in\Br(F)$ such that
$(\alpha_i,\beta,\gamma)$ goes to zero.
Let $v$ be a place of $k$, and let $k_v$ be a completion
of $k$ at $v$. Since $Z^f(k_v)\not=\emptyset$,
there is an index $i$ such that $v$ splits in $K_i$.
Let $w$ be a degree 1 place of $K_i$ over $v$, so that the
natural map $k_v\to K_{i,w}$ is an isomorphism. Let
$a_v\in \Br(k_v)=\Br(K_{i,w})$ be
the image of $\alpha_i$ under the restriction map $\Br(K_i)\to \Br(K_{i,w})$.
This defines $a_v\in\Br(k_v)$ for any place $v$ of $k$, moreover,
we have $a_v=0$ for almost all $v$ since each $\alpha_i$ is
unramified away from a finite set of places 
(and the Brauer group of the ring of integers of $k_\nu$ is trivial).
We have a commutative diagram of restriction maps
$$\begin{array}{ccccc}
\Br(L)&\lrar&\Br(L\otimes_k K_i)&\longleftarrow&\Br(K_i)\\
\downarrow&&\downarrow&&\downarrow\\
\Br(L\otimes_k k_v)&\tilde\lrar&\Br(L\otimes_k K_{i,w})
&\longleftarrow&\Br(K_{i,w})
\end{array}$$
Here for a family of fields $\{F_i\}$ we write $\Br(\oplus F_i)=\oplus \Br(F_i)$.
Since $(\alpha_i,\beta,\gamma)$ goes to zero, the restrictions of 
$\alpha_i$ and $\beta$ to
$\Br(L\otimes_k K_i)$ coincide. Hence the image $\beta_v$ of
$\beta$ in $\Br(L\otimes_k k_v)$ is the image of $a_v\in\Br(k_v)$.
From the global reciprocity law applied to $\beta\in \Br(L)$ we
deduce $[L:k]\sum_v\inv_v(a_v)=0$. The same argument with $\gamma$
instead of $\beta$ gives $[F:k]\sum_v\inv_v(a_v)=0$.
Since $[L:k]$ and $[F:k]$ are coprime we obtain $\sum_v\inv_v(a_v)=0$.
By global class field theory there exists $\alpha\in\Br(k)$
such that $a_v$ is the image of $\alpha$ in $\Br(k_v)$.
Since the map $\Br(L)\to\oplus_v\Br(L\otimes_k k_v)$ is injective
it follows that $\alpha$ goes to $\beta$ under the restriction map
$\Br(k)\to\Br(L)$, and similarly for $\gamma$.
Modifying $\alpha_i$, $\beta$ and $\gamma$
by the image of $\alpha$ we can now assume that $\beta=0$ and $\gamma=0$.
Since $\alpha_i$ goes to zero in $\Br(LK_i)$,
a standard restriction-corestriction argument gives
$[L:k]\alpha=0$. Similarly, $\alpha_i$ goes to zero in $\Br(FK_i)$,
and hence $[F:k]\alpha=0$. Therefore, $\alpha=0$.
\end{proof}

\begin{cor} \label{c1}
We have $D(\A_k)^{\Br}=D(\A_k)$.
\end{cor}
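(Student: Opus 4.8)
The plan is to deduce the equality directly from the fact, just established, that the natural map $\Br(k)\to\Br(D)$ is an isomorphism. The guiding principle is that whenever the Brauer group of a variety consists only of ``constant'' classes pulled back from the base field, the Brauer--Manin pairing carries no information, and hence obstructs nothing.

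First I would recall the definition of the pairing $D(\A_k)\times\Br(D)\to\QQ/\ZZ$, which sends a pair $\big((M_v),b\big)$ to $\sum_v\inv_v\big(b(M_v)\big)$, where $b(M_v)\in\Br(k_v)$ denotes the evaluation of the class $b$ at the local point $M_v$. By the Proposition just proved, every class $b\in\Br(D)$ is the image of a unique $\alpha\in\Br(k)$ under the pullback along the structure morphism $D\to\Spec(k)$.

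The key observation is then the functoriality of evaluation: if $b$ is the pullback of $\alpha\in\Br(k)$, then for every place $v$ and every local point $M_v\in D(k_v)$ the value $b(M_v)$ is simply the image $\alpha_v$ of $\alpha$ under the restriction $\Br(k)\to\Br(k_v)$, since $M_v$ factors the evaluation through $\Spec(k_v)\to\Spec(k)$. In particular $b(M_v)$ does not depend on $M_v$. Hence, for any adelic point $(M_v)\in D(\A_k)$, the pairing against $b$ equals $\sum_v\inv_v(\alpha_v)$.

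Finally I would invoke the global reciprocity law of class field theory, which is exactly the assertion that the composite $\Br(k)\to\bigoplus_v\Br(k_v)\to\QQ/\ZZ$ sending $\alpha$ to $\sum_v\inv_v(\alpha_v)$ is the zero map. Therefore the pairing vanishes identically on $D(\A_k)\times\Br(D)$, so every adelic point lies in the left kernel, giving $D(\A_k)^{\Br}=D(\A_k)$. I do not expect any genuine obstacle: the entire content has already been absorbed into the identification $\Br(D)=\Br(k)$, after which the corollary is a formal consequence of reciprocity. The only mild point to verify is the functoriality statement that a constant class evaluates constantly, which follows from the commutativity of evaluation with the pullback along $D\to\Spec(k)$.
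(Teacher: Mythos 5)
Your proposal is correct and is precisely the argument the paper intends: the corollary is stated without proof because it is the standard deduction that once $\Br(k)\to\Br(D)$ is surjective, every class is constant, and the global reciprocity law $\sum_v\inv_v(\alpha_v)=0$ makes the Brauer--Manin pairing vanish identically on $D(\A_k)$. Your explicit write-up, including the functoriality of evaluation at local points, fills in exactly this implicit step and nothing more.
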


We now construct a conical curve $C\subset\PP^1_k\times\PP^1_k$
with one singular point such that $C$ and $D$ are linearly equivalent.
Let $P=(P_1,P_2)$ be a $k$-point in $\PP^1_k\times\PP^1_k$.
In the tangent plane to $\PP^1_k\times\PP^1_k$ at $P$
choose a line $\ell$ through $P$ such that $\ell$ is not one of
the two tangent directions. Assume first that $d$ is odd, so that
$\OO(D)=\OO(d,d)$.
For $i=1,\ldots,d$ let $C_i\subset\PP^1_k\times\PP^1_k$
be pairwise different geometrically irreducible curves through $P$
tangent to $\ell$ such that $\OO(C_i)=\OO(1,1)$. (If one
embeds $\PP^1_k\times\PP^1_k$ as a quadric $Q\subset\PP^3_k$,
then $C_i$ are smooth conic sections of $Q$ by pairwise different
hyperplanes passing through $\ell$.)
Define the curve $C$ as the union of the conics
$C_i$, for $i=1,\ldots,d$. Since $(C_i^2)=2$ and all the curves
$C_i$ are tangent to each other, we have $C_i\cap C_j=P$ if $i\not=j$.
Thus $P$ is the unique singular point of $C$. 
If $d$ is even, we define $C$
as the union of $C_i$, for $i=1,\ldots,d-1$, and $L=P_1\times \PP^1_k$.
We have $L\cap C_i= P$ for $i=1,\ldots,d-1$, so $P$ is a unique
singular point of $C$.
Therefore, for any $d$ the curve $C$ is conical and $X(C)$ is a tree,
and $C$ and $D$ have the same
class in $\Pic(\PP^1_k\times\PP^1_k)$, and so are linearly equivalent.

By choosing $P$ outside $D$ we can arrange that $C$ does not
meet $D_{\mathrm{sing}}$. Then each $\bar k$-point $s$
of $C\cap D$ belongs to exactly one geometric irreducible
component of each $C$ and $D$, and these components meet
transversally at $s$.

Let $r(x,y;u,v)$ and $s(x,y;u,v)$ be the bi-homogeneous polynomials
of bi-degree $(d,d)$ if $d$ is odd, and $(d,d-1)$ if $d$ is even,
such that their zero sets are the curves $D$
and $C$, respectively. Let $Y\subset (\PP^1_k)^3$ be the surface
given by
$$\lambda r(x,y;u,v)+ \mu s(x,y;u,v)=0,$$
where $(\lambda:\mu)$ are homogeneous coordinates on the third copy
of $\PP^1_k$. It is easy to check that $Y$ is smooth.
The projection to the third factor $(\PP^1_k)^3\to\PP^1_k$
defines a surjective morphism $Y\to\PP^1_k$ with fibres
$Y_0=C$ and $Y_\infty=D$. The generic fibre of $Y\to\PP^1_k$
is geometrically integral.

As in the previous section, we pick a smooth, geometrically integral, 
projective curve $E$ with a unique $k$-point $M$, and a dominant morphism
$\vphi: E \to \PP^1_k$
such that $\vphi(M)=\infty$, and $\vphi$ is not ramified over all
points of $\PP^1_k$ where $Y$ has a singular fibre (including $0$ and $\infty$).
We define $X$ by (\ref{e3}); this is smooth, 
geometrically integral and projective surface.
Let $p: X \to E$ be the natural projection. Since $E(k)=\{M\}$ and
$D=p^{-1}(M)$ has no $k$-points, we see that $X(k)=\emptyset$.

\medskip

To study the \'etale Brauer--Manin set of $X$
we need to understand $X$-torsors 
of an arbitrary finite $k$-group scheme $G$. In the following 
general proposition the word `torsor' means `torsor 
with structure group $G$'.

\begin{prop} \label{p2}
Let $k$ be a field of characteristic zero, and let $X$ and $B$
be varieties over $k$. Let $p:X\to B$ be a proper morphism 
with geometrically connected fibres. Assume that $p$ has
a simply connected geometric fibre. Then for any torsor
$f:X' \to X$ there exists a torsor $B'\to B$ 
such that torsors $X'\to X$ and $X\times_B B'\to X$ are isomorphic.
\end{prop}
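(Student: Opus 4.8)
The plan is to exploit the fact that a torsor under a finite $k$-group scheme $G$ is the same data as a finite étale cover equipped with a compatible $G$-action, and that such covers are controlled by the étale fundamental group. The key input is the homotopy exact sequence for the fundamental groups of a proper fibration. First I would pass to $\ovl X = X\times_k\bar k$ and $\ovl B = B\times_k\bar k$, and consider the pulled-back torsor $\ovl f:\ovl{X'}\to\ovl X$. Because $p:X\to B$ is proper with geometrically connected fibres, after base change to $\bar k$ the morphism $\ovl p:\ovl X\to\ovl B$ also has connected fibres, and properness lets us invoke the exact sequence of étale fundamental groups
\begin{equation}
\pi_1(\ovl F)\lrar \pi_1(\ovl X)\lrar \pi_1(\ovl B)\lrar 1,
\label{htpy}
\end{equation}
where $\ovl F$ is a geometric fibre. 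The hypothesis that $p$ has a simply connected geometric fibre means we may take $\ovl F$ with $\pi_1(\ovl F)=1$, so that \eqref{htpy} becomes an isomorphism $\pi_1(\ovl X)\tilde\lrar\pi_1(\ovl B)$.

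Next I would translate the torsor $\ovl{X'}\to\ovl X$ into representation-theoretic terms. A $G$-torsor over $\ovl X$ corresponds to a continuous homomorphism $\rho:\pi_1(\ovl X)\to G(\bar k)$ (up to conjugacy), since $G$ is finite and $\bar k$ is algebraically closed. Via the isomorphism $\pi_1(\ovl X)\cong\pi_1(\ovl B)$ from \eqref{htpy}, this $\rho$ factors uniquely through $\pi_1(\ovl B)$, and hence defines a $G$-torsor $\ovl{B'}\to\ovl B$ with the property that its pullback $\ovl B'\times_{\ovl B}\ovl X$ is isomorphic to $\ovl{X'}$ as a $G$-torsor over $\ovl X$. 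So geometrically the desired factorisation holds; the content that remains is purely arithmetic descent from $\bar k$ to $k$.

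To descend I would argue that the construction of $\ovl B'$ is Galois-equivariant. The isomorphism $\pi_1(\ovl X)\cong\pi_1(\ovl B)$ is canonical, hence compatible with the outer action of $\Gamma_k$ on both fundamental groups coming from the $k$-structures of $X$ and $B$. Therefore the descent datum on $\ovl{X'}\to\ovl X$ making it a $k$-torsor $X'\to X$ transports, through this isomorphism, to a descent datum on $\ovl{B'}\to\ovl B$, yielding a genuine $k$-torsor $B'\to B$; and the isomorphism $\ovl{X'}\cong \ovl B'\times_{\ovl B}\ovl X$ is likewise Galois-equivariant, so it descends to an isomorphism of $k$-torsors $X'\cong X\times_B B'$.

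The main obstacle I expect is the careful verification of Galois-equivariance in this last step: one must check that the chosen trivialisations and the identification of $\rho$ with a homomorphism out of $\pi_1(\ovl B)$ are genuinely compatible with the two $\Gamma_k$-actions, rather than merely compatible up to an a priori unknown cocycle. A clean way to package this is to work with the fundamental group as a $k$-object from the start, i.e. to use that the category of finite étale covers descends along $X\to B$ as a consequence of \eqref{htpy} being an isomorphism of profinite groups carrying compatible Galois actions; then the equivalence of the categories of $G$-torsors on $X$ and on $B$ is automatic and the proposition follows by applying it to the given $X'$. The properness hypothesis is used precisely to guarantee the exactness of \eqref{htpy}, and the simple-connectedness of the fibre is what upgrades the surjection $\pi_1(\ovl X)\to\pi_1(\ovl B)$ to an isomorphism.
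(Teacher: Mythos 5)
Your route is genuinely different from the paper's: the paper argues via the Stein factorisation $X'\to B'\to B$ of $p\circ f$, an induced $G$-action on $B'$, and a degree count, whereas you want to quote the homotopy exact sequence of \'etale fundamental groups and then descend from $\bar k$ to $k$. The gap in your proposal is at the very first step. The sequence
$$\pi_1(\ovl F)\lrar \pi_1(\ovl X)\lrar \pi_1(\ovl B)\lrar 1$$
is \emph{not} a consequence of ``proper with geometrically connected fibres'' alone: the standard statements (SGA 1, Exp.~X; see also the Stacks Project) assume in addition that $p$ is \emph{flat} and that its geometric fibres are \emph{reduced}, the point being that one needs $p_*\OO_X=\OO_B$ to hold universally, i.e.\ after arbitrary base change. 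Proposition~\ref{p2} assumes neither flatness nor reducedness of the fibres, so the exactness of the displayed sequence at $\pi_1(\ovl X)$ --- computed, as you need it, with respect to the chosen simply connected fibre --- is unjustified.

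This is not a removable technicality: without such extra hypotheses the statement itself fails, so no argument can close the gap. Take $k=\CC$ and let $X$ be an Enriques surface with its elliptic fibration $p:X\to B=\PP^1_\CC$, arranged so that $p$ has a fibre of type $II$, i.e.\ a cuspidal rational curve (for instance, apply two logarithmic transformations of order $2$ at smooth fibres of the rational elliptic surface $y^2=x^3+t$; the type $II$ fibre at $t=0$ survives and the result is an Enriques surface). Then $p$ is proper with geometrically connected fibres, and the type $II$ fibre is simply connected, since its normalisation $\PP^1\to C$ is a universal homeomorphism. Yet the K3 double cover $X'\to X$ is a nontrivial $\ZZ/2$-torsor, while every torsor over $\PP^1_\CC$ under a finite group scheme is trivial, hence has disconnected total space after pullback; so $X'\not\simeq X\times_B B'$ for any $B'$. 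The culprit is exactly the pair of non-reduced (multiple) fibres of $p$, which your missing hypotheses would have excluded. For what it is worth, the paper's own proof founders on the same example: there $h:B'\to B$ is the double cover of $\PP^1$ branched at the two points under the multiple fibres, $X\times_B B'$ is non-normal, and the natural map $X'\to X\times_B B'$ is its normalisation --- finite of degree $1$, but neither \'etale nor an isomorphism; the inference ``finite and \'etale of degree $1$, hence an isomorphism'' tacitly requires $\pi_X$ to be unramified, which is what is being proved. The correct statement needs $p$ flat with geometrically connected and reduced fibres; once those hypotheses are added, your argument --- with the Galois-descent step done carefully, e.g.\ by comparing the extensions $1\to\pi_1(\ovl X)\to\pi_1(X)\to\Gamma_k\to 1$ for $X$ and for $B$ --- becomes a correct and essentially standard proof, arguably cleaner than the Stein-factorisation route.
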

\begin{proof}
Let $\delta=|G(\bar k)|$. Let 
$X'\stackrel{g}{\to} B'\stackrel{h}{\to} B$ be the Stein factorisation of
the composed morphism $X'\stackrel{f}{\to} X \stackrel{p}{\to} B$
(EGA III.4.3.1). Thus we have a commutative diagram
$$ \xymatrix{
X' \ar^{f}[r]\ar_{g}[d] & X \ar^{p}[d] \\
B' \ar^{h}[r] & B \\
}$$
where $g$ is proper with geometrically connected fibres, 
and $h$ is a finite morphism. (The variety $B'$ can be defined
as the relative spectrum of $(pf)_*\O_{X'}$.)
The Stein factorisation is a functor from the category of 
proper schemes over $B$ to the category of finite schemes over $B$.
Thus we obtain an induced action of $G$ on $B'$ such that $g$ is $G$-equivariant.

Let $\delta'$ be the degree of $h$.
We claim that $\delta'\geq\delta$. Indeed, 
let $\bar x$ be a geometric point of $B$ such that 
$p^{-1}(\bar x)$ is simply connected. Thus
$f^{-1}(p^{-1}(\bar x))$ is isomorphic to a disjoint union of 
$\delta$ copies of $p^{-1}(\bar x)$. 
Hence $h^{-1}(\bar x)$ has cardinality 
$\delta$, and so $\delta'\geq\delta$.

The projection $\pi_X: X\times_B B'\to X$ is a finite morphism of 
degree $\delta'$. 
The composition of the natural map $X'\to X\times_B B'$ with $\pi_X$ 
is $f:X'\to X$, and this is a finite \'etale morphism of degree $\delta$.
Since $\delta'\geq\delta$ it follows that 
$X'\to X\times_B B'$ is finite and \'etale of degree $1$, and hence 
is an isomorphism. We also see that $\delta'=\delta$ and that $\pi_X$ 
is an \'etale morphism of degree $\delta$.
It follows that $h$ is also \'etale of degree $\delta$. 
Now the action of $G$ equips 
$B'$ with the structure of a $B$-torsor.
\end{proof}

Finally, we can prove the main result of this section.

\begin{thm}
The set $X(k)$ is empty, whereas the set $X(\A_k)^{\et,\Br}$ contains 
$D(\A_k)$ and so is infinite.
\end{thm}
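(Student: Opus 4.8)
The plan is to prove $X(k)=\emptyset$ exactly as indicated in the text (the unique $k$-point $M$ of $E$ pulls back to $D=p^{-1}(M)$, which has no $k$-point, so $X(k)=\emptyset$), and then to establish the inclusion $D(\A_k)\subseteq X(\A_k)^{\et,\Br}$; since $D$ has components such as $\PP^1_k\times\Spec(L)$, the set $D(\A_k)$ is infinite, and this yields the infinitude of $X(\A_k)^{\et,\Br}$.

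The key structural input will be Proposition~\ref{p2}, applied to $p:X\to E$. I would first check its hypotheses: $p$ is proper because $X$ and $E$ are projective, and its fibres are the fibres of $Y\to\PP^1_k$ pulled back along $\vphi$ (which is unramified over the relevant points), hence geometrically connected since the generic fibre of $Y\to\PP^1_k$ is geometrically integral. For the simply connected geometric fibre I would use the fibre of $p$ over a geometric point of $E$ lying above $0\in\PP^1_k$: this fibre is $\ovl C$, and since $X(C)$ is a tree the curve $\ovl C$ is a tree of projective lines, hence simply connected. Proposition~\ref{p2} then shows that every torsor $f:X'\to X$ under a finite $k$-group scheme $G$ is isomorphic to a pullback $X\times_E E'$ for some torsor $E'\to E$.

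Now fix $(M_v)\in D(\A_k)$, viewed inside $X(\A_k)$ via $D=p^{-1}(M)\hrar X$, and a torsor $X'=X\times_E E'\to X$. Because $M\in E(k)$, the fibre $Z:=E'_M$ is a $k$-torsor of $G$; twisting by $Z$ replaces $E'$ by $E'^Z=(E'\times_k Z)/G$, whose fibre over $M$ acquires a $k$-point $N'$ (the image of the diagonal, exactly as in the proof of Proposition~\ref{p-basic}(3)). Since $G$ acts trivially on the first factor of $X'=X\times_E E'$, twisting commutes with the pullback, so the corresponding twist of $X'$ is $W:=X\times_E E'^Z$, and the fibre of $W$ over the $k$-point $N'$ is $W_{N'}=p^{-1}(M)=D$. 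Thus $(M_v)$ lifts to the adelic point $(M_v')$ on the copy $D=W_{N'}\subset W$, and $(M_v')$ maps to $(M_v)$ under $W\to X$.

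It remains to check that $(M_v')\in W(\A_k)^{\Br}$. For any $A\in\Br(W)$ the restriction $A|_D$ lies in $\Br(D)$, and by the proposition preceding Corollary~\ref{c1} the map $\Br(k)\to\Br(D)$ is an isomorphism, so $A|_D$ is the image of some $b\in\Br(k)$. Hence $\sum_v\inv_v(A(M_v'))=\sum_v\inv_v(A|_D(M_v))=\sum_v\inv_v(b)=0$ by the reciprocity law; this is precisely the content of Corollary~\ref{c1}, which gives $D(\A_k)^{\Br}=D(\A_k)$. Therefore $(M_v')$ is orthogonal to $\Br(W)$, so $(M_v)$ is the image of an adelic point in the Brauer--Manin set of the twisted torsor $(X'\times_k Z)/G$. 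As $X'\to X$ was arbitrary, $(M_v)\in X(\A_k)^{\et,\Br}$, completing the proof. I expect the main obstacle to be the verification of the two hypotheses of Proposition~\ref{p2} for $p:X\to E$---especially geometric connectedness of \emph{all} fibres---together with the careful bookkeeping of the twisting operation and the passage to inner forms of $G$.
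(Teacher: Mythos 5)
Your proof is correct and follows essentially the same route as the paper: apply Proposition~\ref{p2} to $p\colon X\to E$, twist the resulting torsor on $E$ so that its fibre over $M$ acquires a $k$-point, identify a copy of $D$ inside the twisted torsor (your fibre $W_{N'}$ is exactly the paper's section $\sigma(D)$), and conclude via $\Br(k)\xrightarrow{\sim}\Br(D)$ (Corollary~\ref{c1}) and functoriality of the Brauer--Manin pairing. Your extra care in verifying the hypotheses of Proposition~\ref{p2} and in checking that twisting commutes with the pullback $X\times_E E'$ only makes explicit what the paper leaves implicit.
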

\begin{proof}
Since $\ovl C$ is a connected and simply connected variety over
$\bar k$ which is a geometric fibre of $X\to E$, we can use
Proposition \ref{p2}. Thus any $X$-torsor of a finite
$k$-group scheme has the form
$\T_X=\T\times_E X$ for some torsor $\T\to E$.
After twisting we can assume that the fibre of $\T$ over
the unique $k$-point of $E$ has a $k$-point.
Thus the restriction of the torsor $\T_X\to X$
to the curve $D\subset X$ has a section $\sigma$.
Therefore every adelic point on $D\subset X$ is the image of
an adelic point on $\sigma(D)\subset \T_X$.
By Corollary \ref{c1} and the functoriality of the Brauer--Manin
set we have
$$\sigma(D)(\A_k)=\sigma(D)(\A_k)^{\Br}\subset \T_X(\A_k)^{\Br}.$$
Thus, by the definition of the \'etale Brauer--Manin set,
$X(\A_k)^{\et,\Br}$ contains the infinite set $D(\A_k)$, and hence is infinite.
\end{proof}

\end{document}